\setlist[enumerate]{label=(\roman*), align=left}
\newtheorem{theorem}{Theorem}
\newtheorem{lemma}[theorem]{Lemma}
\newtheorem{definition}[theorem]{Definition}
\newtheorem{proposition}[theorem]{Proposition}
\newtheorem{remark}{Remark}
\newtheorem{example}{Example}
\newtheorem{algorithm}{Algorithm}
\begin{document}
\title{An Inexact Boosted Difference of Convex Algorithm for Nondifferentiable Functions}

\author{
O. P. Ferreira \thanks{Institute of Mathematics and Statistics, orizon@ufg.br, CEP 74001-970 - Goi\^ania, GO, Brazil, E-mail: {\tt orizon@ufg.br}.}
\and
B. S. Mordukhovich \thanks{ Department of Mathematics, Wayne State University, Detroit, MI 48202, USA, E-mail: {\tt aa1086@wayne.edu}  }
\and
 W. M. S. Santos \thanks{Department of Mathematics, Federal University of Piau\'{i}, Teresina, PI, Brazil, E-mail: {\tt joaocos.mat@ufpi.edu.br}}
\and
J. C. O. Souza \thanks{Piau\'{i} Institute of Technology and Department of Mathematics, Federal University of Piau\'{i}, Teresina, PI, Brazil, E-mail: {\tt joaocos.mat@ufpi.edu.br}
}}

\maketitle

\begin{abstract}
In this paper, we introduce an inexact approach to the Boosted Difference of Convex Functions Algorithm (BDCA) for solving nonconvex and nondifferentiable problems involving the difference of two convex functions (DC functions). Specifically, when the first DC component is differentiable and the second may be nondifferentiable, BDCA utilizes the solution from the subproblem of the DC Algorithm (DCA) to define a descent direction for the objective function. A monotone linesearch is then performed to find a new point that improves the objective function relative to the subproblem solution. This approach enhances the performance of DCA. However, if the first DC component is nondifferentiable, the BDCA direction may become an ascent direction, rendering the monotone linesearch ineffective. To address this, we propose an Inexact nonmonotone Boosted Difference of Convex Algorithm (InmBDCA). This algorithm incorporates two main features of inexactness: First, the subproblem therein is solved approximately allowing us for a controlled relative error tolerance in defining the linesearch direction. Second, an inexact nonmonotone linesearch scheme is used to determine the step size for the next iteration. Under suitable assumptions, we demonstrate that InmBDCA is well-defined, with any accumulation point of the sequence generated by InmBDCA being a critical point of the problem. We also provide iteration-complexity bounds for the algorithm. Numerical experiments show that InmBDCA outperforms both the nonsmooth BDCA (nmBDCA) and the monotone version of DCA in practical scenarios.\\
{\bf Keywords.} DC functions; Boosted difference of convex functions algorithm; Nonmonotone linesearch; Convergence analysis; Complexity analysis\\
{\bf AMS subject classifications.} 49J53; 90C26; 65K05; 65K10.

\end{abstract}


\section{Introduction}

In this paper, we address optimization problems of minimizing {\em Difference of Convex} (DC) functions, where {\em both} DC components may be {\em nondifferentiable}. Over the past few decades, DC programming has been well recognized in optimization theory and has been successfully applied across various fields including machine learning and image processing \cite{amp24}, compressed sensing \cite{YinLouQi2015}, location problems \cite{CruzNetoLopesSantosSouza2019,MN23}, biochemistry modeling \cite{amp24}, sparse optimization \cite{GotohTakeda2018}, clustering problems \cite{ARAGON2019,bmnt22}, hierarchical clustering \cite{NamGeremewReynoldsTran2018}, clusterwise linear regression \cite{BagirovUgon2018}, multicast network design \cite{GeremewNamSemenovBoginski2018}, multidimensional scaling \cite{LeTao2001,ARAGON2019}, etc.

The Difference of Convex functions Algorithm (DCA) was probably the first algorithm specifically designed for solving DC optimization problems \cite{TaoLe1997,Pham1986}. Over the years, numerous DCA variants have emerged, and their theoretical properties and practical applications have been thoroughly explored across a wide range of optimization challenges; see the surveys in \cite{DCAFirst2018,OliveiraABC}. Recent advancements in DC programming have emphasized algorithmic improvements including subgradient methods \cite{KhamaruWainwright2019}, proximal-subgradient approaches \cite{CRUZNETOetal2018,Moudafi2006,SOUZA3016,SunSampaio2003}, proximal bundle methods 
\cite{Kanzow2024,Welington2019}, double bundle methods \cite{KaisaBagirov2018}, codifferential calculus \cite{BagirovUgon2011}, inertial techniques \cite{WelingtonTcheou2019}, and coderivative-based regularized semi-Newton methods \cite{amp24,mordukhovich24}.  

Due to their simplicity, cost-effectiveness as well as reliability, DCA and its variants have become essential tools in high-dimensional, nonconvex optimization. A notable advancement in this area was the introduction and development of the Boosted DC Algorithm (BDCA) \cite{AragonEtall2022,ARAGON2017,ARAGON2019}, which accelerates the convergence of the traditional DCA by incorporating a linesearch mechanism; see also \cite{amp24,Zhang2024} among other recent publications.  This mechanism allows a larger step size compared to standard DCA, enhancing the convergence performance, and improving solution quality by avoiding poor local minima. The original BDCA was designed primarily for cases where the first DC component is differentiable, ensuring a descent direction. However, the applicability of this algorithm in its original form is limited when both components are nondifferentiable. We refer the reader to \cite{PhamDinh2024} for further discussions on open issues and recent advances in DC programming.

To address the limitations that BDCA faces when dealing with nondifferentiable DC components, the recent paper \cite{ferreira2024boosted} introduces the nonmonotone Boosted Difference of Convex Algorithm (nmBDCA). The primary distinction between nmBDCA and BDCA emerges when the first component is {\em nondifferentiable}. In such cases, the linesearch direction used in BDCA may fail to provide a descent direction rendering the standard Armijo linesearch ineffective. To overcome this, nmBDCA employs a nonmonotone Armijo-like linesearch that remains effective even when the first component lacks differentiability. This modification allows nmBDCA to handle a broader range of problems, thereby extending its applicability beyond BDCA and enhancing the efficiency of the linesearch. Furthermore, there is potential to improve BDCA's overall performance by addressing the computational challenges associated with solving the subproblem in each iteration. Given the inherent difficulties in obtaining exact solutions due to finite precision constraints, these subproblems are often solved approximately in practice.

To improve the {\em computational efficiency} of BDCA and nmBDCA, we propose in this paper an {\em Inexact nonmonotone Boosted Difference of Convex Algorithm} (InmBDCA). Our approach can be viewed as an inexact variant of the BDCA introduced in \cite{ARAGON2017} in two key aspects. First, the {\em subproblem} in each iteration is solved {\em approximately} with a relative error tolerance, and this approximate solution is used to determine the linesearch direction, which may or {\em may not be a descent direction} for the objective function. Second, an {\em inexact Armijo-type} linesearch employing a nonmonotone scheme is used to select the step size for the next iteration. To the best of our knowledge, inexact versions of BDCA or nmBDCA have not been previously explored in the literature. The motivation for this paper is to establish theoretical foundations for the computational behavior of BDCA and nmBDCA under finite precision. Observe that various problems have been effectively addressed using an inexact framework in other studies providing computational advantages; see, e.g., \cite{cruz2022abstract,kmt24a,kmt24b,liu2022inexact,nakayama2024inexact,WelingtonTcheou2019,souza2015proximal,SOUZA3016,SunSampaio2003,zhang2024inexact}. Under appropriate assumptions, we establish that InmBDCA is {\em well-defined} and that any accumulation point of the iterative sequence is a {\em critical point} of the problem. We also provide {\em iteration-complexity bounds} for the algorithm. Numerical experiments demonstrate that InmBDCA outperforms both the nonmonotone BDCA (nmBDCA) and the monotone DCA in practical scenarios.

The rest of this paper is structured as follows. Section~\ref{sec2} reviews relevant definitions and preliminary results that are used throughout the entire paper. In Section~\ref{sec3}, we formulate the DC problem under consideration and discuss the standing assumptions together with related issues. The novel inexact boosted DC algorithm is designed in Section~\ref{sec4} with establishing its well-definiteness and some important properties. Section~\ref{sec5} investigates various strategies to select an algorithm parameter related to the nonmonotone Armijo linesearch in the proposed algorithm. 
Section~\ref{sec6} is devoted to convergence analysis of the new algorithm, and the subsequent Section~\ref{sec7} establishes iteration-complexity bounds for iterates. In Section~\ref{sec8}, we present numerical illustrations of our new algorithm by using MATLAB software. The final Section~\ref{sec9} summarizes the obtained results and discussed some directions of our future research.


\section{Preliminaries from Convex Analysis}\label{sec2} 

This section presents some notions and results from convex analysis that are used throughout the paper. For simplicity, we consider real-valued functions $f:\mathbb{R}^{n}\to  \mathbb{R} $, although many definitions and statements hold for extended-real-valued functions, which may take value of (plus) $\infty$. Recall that the convexity of $f$ means that 
$f(\lambda x + (1-\lambda)y)\leq \lambda f(x) + (1-\lambda) f(y)$, for all $x,y\in {\mathbb{R}^{n}}$ and $\lambda \in [0,1]$. We say that $f$ is {\em strongly convex} with modulus $\sigma>0$ if the function
\begin{equation}\label{def:cssf}
f(x)- (\sigma/2)\|x\|^2\;\mbox{ is convex for }\;x\in\mathbb{R}^n.
\end{equation}

Recall that any convex function $f:\mathbb{R}^{n}\to \mathbb{R}$ is {\em locally Lipschitzian} on $\mathbb{R}^n$, i.e., for each $x\in\mathbb{R}^n$ there exists a neighborhood $U_x$ of $x$ and a constant $L_x\ge 0$ such that  
$$
|f(x)-f(y)|\leq L_{x}\|x-y\|;\mbox{  whenever }\;y\in U_{x}.
$$
The {\em global Lipschitz continuity} of a mapping  $F\colon\mathbb{R}^n\to\mathbb{R}^n$ means the existence of $L\ge 0$ with
$$ \|F(x)-F(y)\|\le L\|x-y\|\;\mbox{ for all }\;x,y\in\mathbb{R}^n.
$$

If $f:\mathbb{R}^n\to\mathbb{R}$ is differentiable on $\mathbb{R}^n$ and its gradient is globally Lipschitzian on $\mathbb{R}^n$ with constant $L$, then we have the estimate (see \cite[Lemma~5.7]{Amir}):
\begin{eqnarray}\label{global_est}
f\left(x+ \lambda d\right) \leq f(x) +\lambda  \left\langle \nabla  f(x),  d \right\rangle + L\lambda^2 \|d\|^2/2\;\mbox{ for all }\;x, d\in \mathbb{R}^n,\;\lambda\in \mathbb{R}.
\end{eqnarray}

The classical {\em subdifferential} (collection of subgradients) of a convex function $f$ on $\mathbb{R}^n$ at a given point $x\in\mathbb{R}^n$ is defined by 
\begin{equation}\label{sub}
\partial f(x):=\big\{y\in\mathbb{R}^n\;:\;f(z) \geq f(x) + \langle y, z - x \rangle\;\mbox{ for all }\;z \in \mathbb{R}^n\big\}. 
\end{equation}
The fundamental properties of the subgradient set in  \eqref{sub} presented below are well known in convex analysis, see, e.g., \cite[Proposition 2.47]{MN23}.

\begin{proposition}\label{nonemptiness}
Let $f:\mathbb{R}^n \to \mathbb{R}$ be a convex function. Then for every $x \in \mathbb{R}^n$, the subdifferential $\partial{f(x)}$ is a nonempty, convex, and compact set. Moreover, for any $d\in\mathbb{R}^n$, we have $f'(x;d)= \max_{y \in \partial f(x)}{\langle y, d \rangle}$, where $f'(x;d)$ stands for the classical directional derivative of $f$ in direction $d$.
\end{proposition}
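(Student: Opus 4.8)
The plan is to dispatch convexity and compactness directly from the definition \eqref{sub}, and then obtain nonemptiness together with the max formula in one stroke via the directional derivative and a Hahn--Banach extension.

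First I would handle the easy structural properties. Convexity of $\partial f(x)$ is immediate: if $y_1,y_2\in\partial f(x)$ and $\lambda\in[0,1]$, forming the corresponding convex combination of the two subgradient inequalities in \eqref{sub} shows that $\lambda y_1+(1-\lambda)y_2$ again satisfies \eqref{sub}. Closedness is equally direct, since for each fixed $z$ the condition $\langle y,z-x\rangle\le f(z)-f(x)$ defines a closed half-space in $y$, and $\partial f(x)$ is the intersection of all of them. Boundedness is where convexity of $f$ enters through its local Lipschitz continuity: on a neighborhood $U_x$ with constant $L_x$, for any $y\in\partial f(x)$ and any unit vector $u$, evaluating \eqref{sub} at $z=x+tu$ for small $t>0$ gives $t\langle y,u\rangle\le f(x+tu)-f(x)\le L_x t$, so $\langle y,u\rangle\le L_x$; choosing $u=y/\|y\|$ yields $\|y\|\le L_x$. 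Thus $\partial f(x)$ is closed and bounded, hence compact.

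The substantive part is nonemptiness and the max formula, and here the central object is the difference quotient $\phi(t)=\big(f(x+td)-f(x)\big)/t$. Convexity of $f$ makes $\phi$ nondecreasing in $t>0$, so the directional derivative $f'(x;d)=\lim_{t\downarrow 0}\phi(t)=\inf_{t>0}\phi(t)$ exists, is finite (bounded below via local Lipschitz continuity), and satisfies $f'(x;z-x)\le\phi(1)=f(z)-f(x)$. A short computation shows that $p(d):=f'(x;d)$ is sublinear, i.e.\ positively homogeneous and subadditive. Two facts then follow. On one hand, for any $y\in\partial f(x)$, dividing the inequality \eqref{sub} at $z=x+td$ by $t$ and letting $t\downarrow 0$ gives $\langle y,d\rangle\le f'(x;d)$; conversely, any $y$ with $\langle y,d\rangle\le f'(x;d)$ for all $d$ lies in $\partial f(x)$, because $\langle y,z-x\rangle\le f'(x;z-x)\le f(z)-f(x)$. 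Hence $\partial f(x)=\{y:\langle y,\cdot\rangle\le p(\cdot)\}$, and the inequality $f'(x;d)\ge\sup_{y\in\partial f(x)}\langle y,d\rangle$ is immediate.

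It remains to realize the supremum as a maximum, which simultaneously settles nonemptiness. Fixing $d\ne0$ (the case $d=0$ being trivial), I would define a linear functional on the line $\mathbb{R}d$ by $td\mapsto t\,p(d)$ and verify, using positive homogeneity for $t\ge0$ and the subadditivity bound $p(d)+p(-d)\ge p(0)=0$ for $t<0$, that it is dominated by $p$ on that line. The Hahn--Banach theorem then extends it to a linear functional $y$ on $\mathbb{R}^n$ with $\langle y,\cdot\rangle\le p(\cdot)$ everywhere, so that $y\in\partial f(x)$ by the characterization above, while $\langle y,d\rangle=p(d)=f'(x;d)$ by construction. This produces an actual subgradient attaining the value, proving both that $\partial f(x)\ne\varnothing$ and the max formula. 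The main obstacle is this last separation step: the elementary manipulations of \eqref{sub} cannot by themselves produce a subgradient, and one genuinely needs the Hahn--Banach extension (equivalently, a supporting hyperplane to the epigraph of $f$ at $(x,f(x))$) to guarantee existence.
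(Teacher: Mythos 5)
Your proof is correct. Note that the paper does not prove this proposition at all: it is stated as a known fact with a citation to \cite[Proposition~2.47]{MN23}, so there is no in-paper argument to compare against. What you wrote is the standard textbook proof (essentially the one in \cite{MN23} and \cite{Amir}): convexity and closedness read off from the half-space description in \eqref{sub}, boundedness from local Lipschitz continuity, monotonicity of the difference quotient to get existence and finiteness of $f'(x;\cdot)$ together with the characterization $\partial f(x)=\{y:\langle y,\cdot\rangle\le f'(x;\cdot)\}$, and a Hahn--Banach extension of the linear functional $td\mapsto t\,f'(x;d)$ dominated by the sublinear function $f'(x;\cdot)$ to produce a subgradient attaining the supremum. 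All the individual steps check out, including the domination for $t<0$ via $f'(x;d)+f'(x;-d)\ge f'(x;0)=0$; the only cosmetic caveat is that the choice $u=y/\|y\|$ in the boundedness argument presumes $y\ne 0$, which is harmless.
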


Recall yet another useful and easily checkable property of the subdifferential \eqref{sub} saying that the mapping $\partial  f$ is {\em locally bounded}, i.e., the image of $\partial f(B)$ of a bounded set $B\subset \mathbb{R}^{n}$ is bounded in $\mathbb{R}^{n}$. As an immediate consequence of this property and the subdifferential definition in \eqref{sub}, we have the following important assertion.

\begin{proposition}\label{boundednessofy^k}
Let $f:\mathbb{R}^{n}\rightarrow \mathbb{R}$ be a convex function, and let $(x^{k})_{k\in\mathbb{N}}$ be such that ~$\lim _{k\rightarrow\infty}x^{k}=\bar x$. If $(y^{k})_{k\in\mathbb{N}}$ is a sequence satisfying $y^{k}\in \partial f(x^{k})$ for every $k\in \mathbb{N}$, then $(y^{k})_{k\in\mathbb{N}}$ is bounded and its accumulation points belongs to $\partial f(\bar x)$.
\end{proposition}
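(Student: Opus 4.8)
The plan is to prove the two assertions separately, using the local boundedness of $\partial f$ for the first and a limiting argument in the subgradient inequality \eqref{sub} for the second.

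First I would establish boundedness of $(y^k)$. Since $x^k\to\bar x$, the set $B:=\{x^k:k\in\mathbb{N}\}\cup\{\bar x\}$ is bounded: a convergent sequence is bounded, so there is $M\ge 0$ with $\|x^k\|\le M$ for all $k$. Invoking the local boundedness property of $\partial f$ recalled just before the statement (the image $\partial f(B)$ of a bounded set $B$ is bounded), I conclude that $\partial f(B)$ is a bounded set. Because $y^k\in\partial f(x^k)\subseteq\partial f(B)$ for every $k$, the whole sequence $(y^k)$ lies in this bounded set and is therefore bounded.

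Next I would handle the accumulation points. By boundedness and the Bolzano--Weierstrass theorem, $(y^k)$ admits accumulation points; let $\bar y$ be any one of them and extract a subsequence with $y^{k_j}\to\bar y$. For each $j$ the inclusion $y^{k_j}\in\partial f(x^{k_j})$ means, by the definition \eqref{sub}, that
\begin{equation*}
f(z)\ge f(x^{k_j})+\langle y^{k_j},\,z-x^{k_j}\rangle\quad\mbox{for all }z\in\mathbb{R}^n.
\end{equation*}
I would then pass to the limit $j\to\infty$ in this inequality with $z$ fixed. The key point is that convex functions on $\mathbb{R}^n$ are continuous, indeed locally Lipschitzian as recalled in the excerpt, so $f(x^{k_j})\to f(\bar x)$; moreover the inner product is jointly continuous and both $y^{k_j}\to\bar y$ and $x^{k_j}\to\bar x$, so $\langle y^{k_j},z-x^{k_j}\rangle\to\langle\bar y,z-\bar x\rangle$. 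Taking limits yields $f(z)\ge f(\bar x)+\langle\bar y,z-\bar x\rangle$ for all $z\in\mathbb{R}^n$, which is precisely the condition $\bar y\in\partial f(\bar x)$.

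There is no genuine obstacle here; the argument is a routine closedness/upper-semicontinuity property of the convex subdifferential. The only point requiring care is the justification of the limit passage, which relies on the continuity of $f$ (guaranteed by the local Lipschitz property of convex real-valued functions) and on the continuity of the inner product in both arguments simultaneously; once these are in place the subgradient inequality is preserved in the limit.
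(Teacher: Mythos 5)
Your proof is correct and follows exactly the route the paper indicates: the paper states this proposition as an ``immediate consequence'' of the local boundedness of $\partial f$ (for the boundedness of $(y^k)$) and of the subgradient inequality \eqref{sub} passed to the limit (for the inclusion of accumulation points in $\partial f(\bar x)$), which is precisely your two-step argument. The justification of the limit passage via continuity of $f$ and of the inner product is the right (and only) point needing care, and you handle it properly.
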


\begin{remark}\label{remark:gradcont}
It is well known that if $f:\mathbb{R}^n \to \mathbb{R}$ is a convex function and differentiable at $ \bar{x} \in \mathbb{R}^n$, then $\partial f(\bar{x}) = \{\nabla f(\bar{x})\}$.  Indeed, Proposition\ref{boundednessofy^k}  implies  that for any sequence  $(x^k)_{k \in \mathbb{N}}$ converging to $\bar{x}$, the sequence $(\nabla f(x^k))_{k \in \mathbb{N}}$ is bounded, and any of its cluster points equals $\nabla f(\bar{x})$. This result ensures that $\nabla f$ is continuous on $\mathbb{R}^n$.
\end{remark}

Now we return to strong convexity from \eqref{def:cssf} and formulate its subdifferential descriptions that can be found in \cite[Theorem~5.24]{Amir}.

\begin{proposition}\label{equistronglyconvex} We have the equivalent statements:
\begin{enumerate}
\item[\bf(i)] $f:\mathbb{R}^{n}\to  \mathbb{R} $ is a strongly convex function with modulus $\sigma>0$ as in \eqref{def:cssf}.
\item[\bf(ii)] $f(y)\ge f(x) + \langle v, y-x \rangle + ({\sigma}/{2}) \| y-x\Vert ^{2}$ for all $x,y\in \mathbb{R} ^{n} $ and  all $v\in \partial  f(x)$.
\item[\bf(iii)] $\langle w-v,x-y \rangle \geq \sigma \| y-x\Vert ^{2}$ for all $x,y\in \mathbb{R} ^{n}$,  all $w\in \partial  f(x)$ and  all  $v\in \partial  f(y).$
\end{enumerate}
\end{proposition}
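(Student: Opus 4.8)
The plan is to establish the equivalence through the cyclic chain $\mathrm{(i)}\Rightarrow\mathrm{(ii)}\Rightarrow\mathrm{(iii)}\Rightarrow\mathrm{(i)}$, writing throughout $g(x):=f(x)-(\sigma/2)\|x\|^2$ and repeatedly exploiting the completing-the-square identity $\tfrac{\sigma}{2}\|y\|^2-\tfrac{\sigma}{2}\|x\|^2-\langle\sigma x,y-x\rangle=\tfrac{\sigma}{2}\|y-x\|^2$. Since $\partial f$ denotes the classical convex subdifferential, I take $f$ convex as the standing hypothesis, so that $\partial f(x)\neq\emptyset$ for every $x$ by Proposition~\ref{nonemptiness}.

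For $\mathrm{(i)}\Rightarrow\mathrm{(ii)}$, I would use that $g$ is convex and that $h(x)=(\sigma/2)\|x\|^2$ is smooth with $\nabla h(x)=\sigma x$; the subdifferential sum rule (applicable since both summands are finite-valued) then gives $\partial f(x)=\partial g(x)+\sigma x$, so any $v\in\partial f(x)$ satisfies $v-\sigma x\in\partial g(x)$. Feeding this into the convex subgradient inequality for $g$ and substituting $g=f-h$, the identity above collapses the quadratic terms into $(\sigma/2)\|y-x\|^2$, which is exactly (ii). For $\mathrm{(ii)}\Rightarrow\mathrm{(iii)}$, I would write (ii) twice, once at base point $x$ with $w\in\partial f(x)$ and target $y$, and once at base point $y$ with $v\in\partial f(y)$ and target $x$; adding the two inequalities cancels $f(x)+f(y)$ and, after regrouping $\langle w,y-x\rangle+\langle v,x-y\rangle=\langle w-v,y-x\rangle$, yields $\langle w-v,x-y\rangle\ge\sigma\|y-x\|^2$.

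The remaining implication $\mathrm{(iii)}\Rightarrow\mathrm{(i)}$ is the crux, and I would route it through $\mathrm{(iii)}\Rightarrow\mathrm{(ii)}\Rightarrow\mathrm{(i)}$. To obtain (ii) from (iii), fix $x,y$ and $v\in\partial f(x)$, parametrize the segment by $\gamma(s)=x+s(y-x)$, and consider $\psi(s)=f(\gamma(s))$, which is convex on $[0,1]$, hence absolutely continuous with $f(y)-f(x)=\int_0^1\psi'(s)\,ds$. At a.e.\ $s$ the derivative equals the directional derivative $f'(\gamma(s);y-x)=\max_{w\in\partial f(\gamma(s))}\langle w,y-x\rangle$ by Proposition~\ref{nonemptiness}, so I may select $w_s\in\partial f(\gamma(s))$ with $\psi'(s)=\langle w_s,y-x\rangle$. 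Applying (iii) to the pair $\gamma(s),x$ and using $\gamma(s)-x=s(y-x)$ gives $\langle w_s,y-x\rangle\ge\langle v,y-x\rangle+\sigma s\|y-x\|^2$ for $s>0$; integrating over $[0,1]$ produces precisely the bound in (ii). Finally, for $\mathrm{(ii)}\Rightarrow\mathrm{(i)}$, I would choose for each $x$ some $v_x\in\partial f(x)$ and set $u_x:=v_x-\sigma x$; then (ii) together with the identity shows $g(y)\ge g(x)+\langle u_x,y-x\rangle$ for all $y$, so $g$ is the pointwise supremum of its affine minorants $y\mapsto g(x)+\langle u_x,y-x\rangle$ and is therefore convex, giving (i).

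The main obstacle is the analytic justification inside $\mathrm{(iii)}\Rightarrow\mathrm{(ii)}$: identifying $\psi'(s)$ almost everywhere with a subgradient pairing and invoking absolute continuity of the one-dimensional convex restriction $\psi$ to write $f(y)-f(x)$ as $\int_0^1\psi'(s)\,ds$. The selection $s\mapsto w_s$ need not be continuous, so I would stress that only the scalar $\psi'(s)=\langle w_s,y-x\rangle$ enters the integral---it is integrable because $\psi$ is Lipschitz on $[0,1]$---and that the inequality furnished by (iii) holds for every admissible $w_s$, so no delicate measurable-selection argument is actually required.
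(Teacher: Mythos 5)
Your argument is correct. Note, however, that the paper does not prove this proposition at all: it is quoted verbatim from the literature with a citation to \cite[Theorem~5.24]{Amir}, so there is no in-paper proof to compare against. What you have written is essentially the standard textbook derivation: (i)$\Rightarrow$(ii) via the subgradient inequality for $g=f-(\sigma/2)\|\cdot\|^2$ together with the sum rule $\partial f(x)=\partial g(x)+\sigma x$; (ii)$\Rightarrow$(iii) by symmetrizing and adding; and (iii)$\Rightarrow$(ii)$\Rightarrow$(i) by integrating the directional derivative of the convex restriction $\psi(s)=f(x+s(y-x))$ along the segment and then exhibiting $g$ as a supremum of affine minorants. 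The one point you were right to flag explicitly is the standing convexity of $f$: without it, $\partial f$ as defined in \eqref{sub} may be empty everywhere, making (ii) and (iii) vacuous and the equivalence false; your reading matches the hypothesis under which the cited theorem is stated. The measure-theoretic care in (iii)$\Rightarrow$(ii) is also handled correctly --- only the scalar $\psi'(s)$, which exists a.e.\ and is integrable by the Lipschitz continuity of $\psi$ on $[0,1]$, enters the integral, so no measurable selection of $w_s$ is needed.
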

The next useful statement is taken from \cite[Lemma~5.20]{Amir}.

\begin{proposition}\label{lema_soma} Let $f:\mathbb{R}^{n}\to  \mathbb{R} $ be a strongly convex function with modulus $\sigma>0$, and let $\bar{f}:\mathbb{R}^{n}\to  \mathbb{R}$ be convex. Then $f + \bar{f}$ is a strongly convex function with modulus $\sigma>0$.
\end{proposition}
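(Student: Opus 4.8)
The plan is to work directly from the definition of strong convexity given in \eqref{def:cssf}, which is the most economical route. To show that $f + \bar f$ is strongly convex with the same modulus $\sigma$, I need only verify that the function $g(x) := (f+\bar f)(x) - (\sigma/2)\|x\|^2$ is convex on $\mathbb{R}^n$.

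The key observation is a simple regrouping: I would rewrite $g$ as
$$g(x) = \big[f(x) - (\sigma/2)\|x\|^2\big] + \bar f(x).$$
By hypothesis $f$ is strongly convex with modulus $\sigma$, so by the very definition \eqref{def:cssf} the bracketed term $f(\cdot) - (\sigma/2)\|\cdot\|^2$ is convex; and $\bar f$ is convex by assumption. Thus $g$ is expressed as a sum of two convex functions.

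The remaining step is to invoke the elementary fact that a sum of convex functions is convex, which follows immediately by adding the defining convexity inequalities for the two summands evaluated at $\lambda x + (1-\lambda)y$ and using $\lambda \in [0,1]$. Hence $g$ is convex, and therefore $f + \bar f$ is strongly convex with modulus $\sigma$ by \eqref{def:cssf}, completing the argument.

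I expect no genuine obstacle here: the entire content is the bookkeeping identity that isolates the quadratic $(\sigma/2)\|x\|^2$ inside $f$ (rather than needing it distributed across both summands), together with the closure of convexity under addition. An alternative, slightly longer route would use the subgradient characterization in Proposition~\ref{equistronglyconvex}(ii): take $v \in \partial f(x)$ and $w \in \partial \bar f(x)$, add the strong convexity inequality for $f$ to the convexity inequality $\bar f(y) \ge \bar f(x) + \langle w, y-x\rangle$ for $\bar f$, and use the subdifferential sum rule $v + w \in \partial(f+\bar f)(x)$ to recover characterization (ii) for $f + \bar f$. However, the definition-based argument above is cleaner and avoids appealing to the sum rule for subdifferentials.
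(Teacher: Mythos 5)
Your argument is correct and complete: isolating the quadratic inside $f$ via the regrouping $(f+\bar f)(x)-(\sigma/2)\|x\|^2=\bigl[f(x)-(\sigma/2)\|x\|^2\bigr]+\bar f(x)$ and then using closure of convexity under addition is exactly the right (and shortest) route from definition \eqref{def:cssf}. The paper itself gives no proof of this proposition --- it simply cites \cite[Lemma~5.20]{Amir} --- so there is nothing to compare against beyond noting that your definition-based argument is the standard one underlying that reference; the alternative subgradient route you sketch also works but is indeed unnecessary here.
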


The following properties of strongly convex functions (see 
\cite[Theorem~5.25 and Corollary~3.68]{Amir}) are important in convex optimization.

\begin{proposition}\label{str-conv}
Let $f:\mathbb{R}^{n}\to  \mathbb{R} $ be a differentiable and strongly convex function, and let $C \subset \mathbb{R}^n$ be a closed and convex set. Then $f$ admits a unique minimizer $\bar x\in C$  characterized by $\left\langle \nabla f(\bar x),x-\bar x\right\rangle \geq 0$ for all $x\in C$. 
\end{proposition}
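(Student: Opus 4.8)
The plan is to split the argument into two parts: first establishing the existence and uniqueness of a minimizer $\bar x$ over $C$, and then deriving the stated variational characterization. For existence, the key observation is that strong convexity forces $f$ to be coercive. Fixing any base point $x_0 \in C$ and invoking Remark~\ref{remark:gradcont} to identify $\partial f(x_0) = \{\nabla f(x_0)\}$, I would apply the subgradient inequality of Proposition~\ref{equistronglyconvex}(ii) with $v = \nabla f(x_0)$ to obtain
\begin{equation*}
f(y) \geq f(x_0) + \langle \nabla f(x_0), y - x_0 \rangle + (\sigma/2)\|y - x_0\|^2 \quad \text{for all } y \in \mathbb{R}^n.
\end{equation*}
Since the quadratic term on the right dominates the linear one, $f(y) \to +\infty$ as $\|y\| \to \infty$, so $f$ is coercive. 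Consequently, for any fixed $x_1 \in C$ the sublevel set $\{x \in C : f(x) \leq f(x_1)\}$ is bounded; it is also closed because $f$ is continuous (being differentiable) and $C$ is closed. Minimizing the continuous $f$ over this compact set then yields a global minimizer $\bar x \in C$ by the Weierstrass theorem.

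For the characterization I would argue both implications. Assuming $\bar x$ minimizes $f$ over $C$, for any $x \in C$ convexity of $C$ gives $\bar x + t(x - \bar x) \in C$ for $t \in (0,1]$, whence $f(\bar x + t(x-\bar x)) - f(\bar x) \geq 0$; dividing by $t$ and letting $t \to 0^+$ produces the directional derivative and yields $\langle \nabla f(\bar x), x - \bar x \rangle \geq 0$. For the converse, suppose $\bar x \in C$ satisfies this inequality for all $x \in C$. Applying Proposition~\ref{equistronglyconvex}(ii) once more with $v = \nabla f(\bar x)$ gives
\begin{equation*}
f(x) \geq f(\bar x) + \langle \nabla f(\bar x), x - \bar x \rangle + (\sigma/2)\|x - \bar x\|^2 \geq f(\bar x) + (\sigma/2)\|x - \bar x\|^2
\end{equation*}
for every $x \in C$. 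This shows both that $\bar x$ is a minimizer and, since the bound is strict whenever $x \neq \bar x$, that it is the unique one---so uniqueness comes essentially for free from the sufficiency direction.

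The argument is largely routine, and I anticipate no serious obstacle; the main demands are bookkeeping ones. One must keep track of which hypothesis does what: mere convexity and differentiability suffice for the forward implication and for the difference-quotient limit, whereas strong convexity with modulus $\sigma > 0$ is precisely what powers coercivity (hence existence) and the strict lower bound (hence uniqueness and sufficiency). A minor subtlety is justifying passage to the limit in the forward direction, which is immediate here because differentiability guarantees $f'(\bar x; x - \bar x) = \langle \nabla f(\bar x), x - \bar x \rangle$; this identity can alternatively be read off from Proposition~\ref{nonemptiness} combined with Remark~\ref{remark:gradcont}.
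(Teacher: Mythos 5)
Your argument is correct and complete: coercivity from the strong-convexity inequality gives existence via Weierstrass, the difference-quotient argument gives necessity of the variational inequality, and Proposition~\ref{equistronglyconvex}(ii) gives both sufficiency and uniqueness through the strict quadratic lower bound. The paper itself offers no proof of this statement---it is quoted directly from \cite[Theorem~5.25 and Corollary~3.68]{Amir}---and your proof is precisely the standard argument found in that reference, so there is nothing to reconcile.
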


Along with the subdifferential \eqref{sub}, a crucial role in our inexact algorithmic developments is paid by the approximate version of \eqref{sub} defined as follows. Given a convex function $f$ on $\mathbb{R}^n$ and given $\varepsilon\ge 0$, the {\em $\varepsilon$-subdifferential} of $f$ at $x$ is
\begin{equation}\label{e-sub}
\partial_\varepsilon f(x):=\big\{y\in\mathbb{R}^n\;:\;f(z) \geq f(x) + \langle y, z - x \rangle - \varepsilon\;\mbox{ for all }\;z \in\mathbb{R}^n\big\}. 
\end{equation}
We refer the reader to, e.g., \cite{hiriart2013convex,mn22} for various properties and applications of \eqref{e-sub}. It follows immediately from definition than 
$0 \in \partial_{\varepsilon}{f(x)}$ if only if $f(x) \leq \inf_{z\in\mathbb{R}^n}{f(z)} + \varepsilon$, which represents an {\em approximate optimality condition}. Similarly to Propositions~\ref{nonemptiness} and \ref{boundednessofy^k}, we have the following statements; see \cite[Theorem~4.1.1 and Proposition~4.1.1]{hiriart2013convex}.

\begin{proposition}
Let $f: \mathbb{R}^n\to \mathbb{R}$ be a convex function. Then for all $x\in \mathbb{R}^n$ and $\varepsilon \geq 0$, the set $\partial_{\varepsilon}{f(x)}$ is nonempty, convex, and compact.
\end{proposition}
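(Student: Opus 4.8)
The plan is to prove that for a convex function $f\colon\mathbb{R}^n\to\mathbb{R}$, the $\varepsilon$-subdifferential $\partial_\varepsilon f(x)$ is nonempty, convex, and compact for every $x\in\mathbb{R}^n$ and $\varepsilon\ge 0$. I would handle the three properties in the order nonemptiness, convexity, closedness, and then boundedness (closedness and boundedness together giving compactness in $\mathbb{R}^n$).

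For \emph{nonemptiness}, I would reduce to the classical subdifferential. By Proposition~\ref{nonemptiness} the ordinary subdifferential $\partial f(x)$ is nonempty, and the defining inequality in \eqref{sub} for any $y\in\partial f(x)$ is stronger than the one in \eqref{e-sub} since $\varepsilon\ge 0$. Hence $\partial f(x)\subseteq\partial_\varepsilon f(x)$, and in particular $\partial_\varepsilon f(x)\neq\emptyset$. For \emph{convexity}, I would take $y_1,y_2\in\partial_\varepsilon f(x)$ and $\lambda\in[0,1]$, form the convex combination $y_\lambda=\lambda y_1+(1-\lambda)y_2$, and check the defining inequality directly: writing the two inequalities $f(z)\ge f(x)+\langle y_i,z-x\rangle-\varepsilon$ for $i=1,2$, multiplying by $\lambda$ and $1-\lambda$ respectively, and adding them, the right-hand sides combine to $f(x)+\langle y_\lambda,z-x\rangle-\varepsilon$ because the constant $\varepsilon$ and the value $f(x)$ are unaffected by the convex combination. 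This shows $y_\lambda\in\partial_\varepsilon f(x)$.

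For \emph{closedness}, I would note that $\partial_\varepsilon f(x)$ is an intersection over all $z\in\mathbb{R}^n$ of the closed half-spaces
\begin{equation*}
H_z:=\big\{y\in\mathbb{R}^n\;:\;\langle y,z-x\rangle\le f(z)-f(x)+\varepsilon\big\},
\end{equation*}
each of which is closed; an arbitrary intersection of closed sets is closed. For \emph{boundedness}, I would exploit local Lipschitz continuity of $f$. The main estimate is to control $\|y\|$ for $y\in\partial_\varepsilon f(x)$ by testing the defining inequality at well-chosen points $z$. Specifically, taking $z=x+t y/\|y\|$ for $y\neq 0$ and a fixed small radius $t>0$ within a neighborhood on which $f$ is Lipschitz with constant $L_x$, the inequality in \eqref{e-sub} gives $f(x+t y/\|y\|)\ge f(x)+t\|y\|-\varepsilon$, while local Lipschitzness yields $f(x+t y/\|y\|)\le f(x)+L_x t$. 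Combining these produces $t\|y\|\le L_x t+\varepsilon$, hence $\|y\|\le L_x+\varepsilon/t$, a uniform bound. Together with closedness this establishes compactness.

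The main obstacle I anticipate is the boundedness step, where the choice of test point must stay inside the neighborhood on which the Lipschitz estimate is valid; one must fix $t>0$ small enough that the ball $\{z:\|z-x\|\le t\}$ lies in $U_x$ before deriving the bound, so the argument is genuinely local. An alternative route to boundedness, perhaps cleaner, is to invoke the known relationship between $\partial_\varepsilon f(x)$ and the nonempty compact set $\partial f(x)$ together with a uniform outer estimate: since $\partial_\varepsilon f(x)$ grows in $\varepsilon$ but remains contained in a set governed by the directional derivatives $f'(x;\cdot)$ of Proposition~\ref{nonemptiness}, its support function is finite in every direction, which for a convex set in $\mathbb{R}^n$ forces boundedness. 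Either way, the three properties assemble into the claimed statement, and the cited references \cite[Theorem~4.1.1 and Proposition~4.1.1]{hiriart2013convex} confirm the result.
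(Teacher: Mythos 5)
Your proof is correct. Note that the paper itself does not prove this proposition at all: it simply records the statement with a citation to \cite[Theorem~4.1.1 and Proposition~4.1.1]{hiriart2013convex}, so your self-contained elementary argument is genuinely more than what the paper provides. All four steps are sound: nonemptiness via the inclusion $\partial f(x)\subseteq\partial_\varepsilon f(x)$ (valid because $\varepsilon\ge 0$ weakens the defining inequality, and $\partial f(x)\neq\emptyset$ by Proposition~\ref{nonemptiness}); convexity by the direct convex-combination computation; closedness as an intersection of closed half-spaces indexed by $z$; and boundedness by testing at $z=x+ty/\|y\|$ with $t$ fixed small enough that the closed ball of radius $t$ lies in the Lipschitz neighborhood $U_x$, which yields the uniform bound $\|y\|\le L_x+\varepsilon/t$. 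You correctly flag the only delicate point, namely that $t$ must be chosen before the estimate so the Lipschitz constant applies. One minor caveat: in your sketched alternative for boundedness, the support function of $\partial_\varepsilon f(x)$ is not $f'(x;\cdot)$ but the $\varepsilon$-directional derivative $\inf_{t>0}\big(f(x+td)-f(x)+\varepsilon\big)/t$; the conclusion (finiteness in every direction forces boundedness) still holds, but the primary Lipschitz argument is the cleaner and fully rigorous route, and it is the one to keep.
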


\begin{proposition}
Let $f:\mathbb{R}^n \rightarrow \mathbb{R}$ be a convex function, and let $(x^k)_{k\in\mathbb{N}} \subset \mathbb{R}^n$ converging to $x$, $(\varepsilon_k)_{k\in\mathbb{N}} \subset \mathbb{R}_+$ converging to $\varepsilon$, and  $(w^k)_{k\in\mathbb{N}}$ converging to $w$. If $w^k \in \partial_{\varepsilon_k}{f(x^k)}$ for all $k \in \mathbb{N}$, then we get the inclusion $w\in \partial_{\varepsilon}{f(x)}$.
\end{proposition}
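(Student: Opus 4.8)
The plan is to unwind the definition \eqref{e-sub} of the $\varepsilon$-subdifferential, write down the defining family of inequalities for each index $k$, and then pass to the limit termwise. Since membership in $\partial_{\varepsilon}f(x)$ amounts to an inequality that must hold for \emph{every} test point $z\in\mathbb{R}^n$, I would first fix an arbitrary such $z$ and keep it fixed throughout the argument.

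For this fixed $z$, the hypothesis $w^k\in\partial_{\varepsilon_k}f(x^k)$ yields, directly from \eqref{e-sub}, the inequality
\[
f(z)\ge f(x^k)+\langle w^k,z-x^k\rangle-\varepsilon_k\quad\mbox{for every }\;k\in\mathbb{N}.
\]
The next step is to take $k\to\infty$ on the right-hand side, which rests on three convergences: (a)~$f(x^k)\to f(x)$; (b)~$\langle w^k,z-x^k\rangle\to\langle w,z-x\rangle$; and (c)~$\varepsilon_k\to\varepsilon$. Convergence~(c) is assumed outright, and (b) follows from $w^k\to w$, $x^k\to x$ together with the joint continuity of the inner product. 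Convergence~(a) is the only point needing a separate justification: it follows from the continuity of the convex function $f$ on $\mathbb{R}^n$, which is guaranteed by the local Lipschitz property of convex functions recalled earlier in this section.

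Combining these, the right-hand side converges to $f(x)+\langle w,z-x\rangle-\varepsilon$; since each of its terms is bounded above by the constant $f(z)$, the non-strict inequality is preserved in the limit and gives
\[
f(z)\ge f(x)+\langle w,z-x\rangle-\varepsilon.
\]
As $z\in\mathbb{R}^n$ was arbitrary, this holds for all $z$, which is precisely the defining condition for $w\in\partial_{\varepsilon}f(x)$. I do not expect any substantive obstacle here: the argument is a routine passage to the limit, and the only thing one must be careful not to overlook is the appeal to continuity of $f$ to control the term $f(x^k)$, everything else being continuity of the inner product and the assumed scalar convergence of $(\varepsilon_k)$.
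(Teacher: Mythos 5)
Your proof is correct: fixing an arbitrary $z$, passing to the limit in the defining inequality of \eqref{e-sub}, and invoking the continuity of the finite-valued convex function $f$ (via its local Lipschitz property) is exactly the standard argument. The paper itself omits the proof and simply cites Hiriart-Urruty and Lemar\'echal, so your write-up supplies precisely the routine limit argument that reference contains; no gaps.
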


\begin{definition}
	Let $f\colon\mathbb{R}^{n}\to\mathbb{R}$ be a locally Lipschitz function.
	The Clarke's subdifferential of $f$ at $x\in \mathbb{R}^{n}$ is given by $\partial _{C} f(x):=\{v \in \mathbb{R}^{n}\: : \: f^{\circ }(x;d)\geq \langle v,d \rangle, ~
\forall d \in \mathbb{R}^{n} \},$
where $f^{\circ }(x;d)$ is the generalized directional derivative of $f$ at $x$ in the direction $d$ given by
$$ f^{\circ }(x;d):= \limsup _{ u\rightarrow x, t\downarrow 0} (f(u+td)-f(u))/t.$$
\end{definition}

Given a lower semicontinuous function $f:\mathbb{R}^n \to \mathbb{R}$. We can define the \textit{Fréchet's subdifferential} at $x \in\mathbb{R}^n$ as follows:
$$\hat{\partial}f(x) := \Big\{ v \in\mathbb{R}^n \: : \: 
\liminf_{y \neq x,y \to x}{\frac{f(y) - f(x) - \langle v, y - x\rangle}{\|y - x\|} \geq 0} \Big\}.$$

\begin{definition}
    Let $f:\mathbb{R}^n \to \mathbb{R}$ be a lower semicontinuous function. The \textit{Mordukhovich's limiting subdifferential} at $x \in\mathbb{R}^n$ is given by
    $$
    \partial_M{f(x)} := \{v \in \mathbb{R}^n \: : \: \exists~ x^k \to x, f(x^k) \to f(x), v^k \in\hat{\partial}{f(x^k)}, v^k \to v \}.
    $$
\end{definition}

\section{Problem Statement and Discussions}\label{sec3}

The DC problem considered in this paper is formulated as follows:
\begin{equation}\label{dcproblem}
\min \phi(x):=g(x)-h(x)\quad  \mbox{such that} ~ x\in \mathbb{R}^{n}.
\end{equation}
Throughout the paper, we imposed the following {\em standing assumptions} on the data of \eqref{dcproblem}:
\begin{enumerate}
\item[\hypertarget{H1}{(H1)}]
$g:\mathbb{R}^{n}\to \mathbb{R}$ and $h:\mathbb{R}^{n}\to \mathbb{R}$ are both strongly convex functions with modulus $\sigma>0$.
\item[\hypertarget{H2}{(H2)}]
The infimum of the objective function is bounded from below, i.e., $\bar\phi:=\inf_{x\in \mathbb{R}^{n}} \{ \phi(x)=g(x)-h(x) \}>-\infty$.
\end{enumerate}

Before proceeding with our analysis, let us first discuss the implications and necessity of the assumptions \hyperlink{H1}{(H1)} and \hyperlink{H2}{(H2)}.

\begin{remark}{\rm 
Assumption \hyperlink{(H1)} is {\em not restrictive}. Indeed, given two convex functions $f_1$ and $f_2$, we can add a strongly convex term $({\sigma}/{2})\| x \|^{2}$ to each to obtain the strongly convex functions $g(x)=f_1(x)+({\sigma}/{2})\| x \|^{2}$ and $h(x)=f_2(x)+({\sigma}/{2})\| x \|^{2}$, both with modulus $\sigma>0$; see Proposition~\ref{lema_soma}. This modification ensures that $\phi(x) = f_1(x) - f_2(x) = g(x) - h(x)$ for all $x \in \mathbb{R}^{n}$. Therefore, introducing a simple quadratic term preserves the DC structure of the problem while ensuring the strong convexity of the components. Note that strict convexity of the objective function guarantees the {\em uniqueness} of minimizers; see Proposition~\ref{str-conv}. On the other hand, Assumption \hyperlink{(H2)} is a standard requirement in DC programming, as it guarantees that the objective function is {\em well-defined} and {\em bounded from below}. This is crucial for ensuring the {\em existence} of a minimum and for the application of optimization algorithms; see \cite{ARAGON2017,ARAGON2019,CRUZNETOetal2018,ferreira2024boosted} for more details.}
\end{remark}

Next we provide the definition of a {\em critical point} in the context of DC programming, which plays a central role in the convergence analysis of algorithms designed to solve problem \eqref{dcproblem}.

\begin{definition}\label{crit}
A point $\bar x\in \mathbb{R}^{n}$ is called {\sc critical} for $\phi$ in \eqref{dcproblem} if the subdifferentials of $g$ and $h$ intersect at $\bar x$, i.e., $\partial g(\bar x)\cap \partial h(\bar x)\not=\varnothing$.
\end{definition}
Observe that when $g$ is differentiable, the criticality in Definition~\ref{crit} reduces to the condition $\nabla g(\bar x)\in \partial h(\bar x)$. Let us discuss relationships between criticality and various notions of {\em stationary points} in DC programming. 

\begin{remark}\label{stat} {\rm Stationarity points in unconstrained optimization are expressed in the form $0\in\partial\phi(\bar x)$.
Since the objective function $\phi$ in the DC program \eqref{dcproblem} is {\em nonconvex}, we have to rely on certain robust notions of nonconvex subdifferentials satisfying adequate calculus rules. Appropriate constructions of this type for local Lipschitzian functions are {\em Clarke's generalized gradient} $\partial_C\phi(\bar x)$ (see \cite{clarke1983optimization}) and {\em Mordukhovich's limiting subdifferential} $\partial_M\phi(\bar x)$ (see \cite{mordukhovich06}). The corresponding notions are known as {\em $C$-stationarity} and 
{\em $M$-stationarity}, respectively. The above subdifferentials are related as $\partial_C\phi(\bar x)={\rm co}\,\partial_M\phi(\bar x)$, where `co' stands for the convex hull of a set. This implies that any $M$-stationary point is $C$-stationary, but not vice versa. If $g$ in \eqref{dcproblem} is ${\cal C}^1$-smooth around $\bar x$, then $C$-stationary points reduces to the criticality $\nabla g(\bar x)\in \partial h(\bar x)$, while $M$-stationarity may be essentially {\em more selective}  than criticality; see \cite{amp24}, particularly Remark~2.6 therein. If both $g$ and $h$ are nonsmooth around $\bar x$, then $C$-stationarity of $\bar x$ ensures its criticality in \eqref{dcproblem} by 
\begin{equation}\label{c-stat}
0\in\partial_C\phi(\bar x)\subset\partial_C g(\bar x)+\partial_C(-h)(\bar x)= \partial g(\bar x)-\partial h(\bar x)
\end{equation}
due to the sum rule and  plus-minus symmetry of the generalized gradient, which agrees with the subdifferential \eqref{sub} for convex functions; see \cite[Propositions~2.2.7, 2.3.1, 2.3.3]{clarke1983optimization}. While the sum rule $\partial_M\phi(x)\subset\partial_M g(\bar x)+\partial_M(-h)(\bar x)$ holds for the limiting subdifferential by \cite[Theorem~2.33]{mordukhovich06}, we don't have the symmetric property $\partial_M(-h)(\bar x)=-\partial_M h(\bar x)$, and thus the $M$-counterpart of \eqref{c-stat} fails. On the other hand, such a plus-minus symmetry holds for {\em Mordukhovich's symmetric subdifferential} of $\phi$ at $\bar x$ defined by
\begin{equation*}
\partial^0_M\phi(\bar x):=\partial_M\phi(\bar x)\cup\big(-\partial_M(-\phi)(\bar x)\big),
\end{equation*}
which satisfies comprehensive calculus rules while being nonconvex and such that $\partial_C\phi(\bar x)={\rm co}\,\partial^0_M\phi(\bar x)$; see \cite{mordukhovich06} for more details. Thus we get  
\begin{equation*}
0\in\partial^0_M\phi(\bar x)\subset\partial g(\bar x)+\partial^0_M(-h)(\bar x)= \partial g(\bar x)-\partial h(\bar x), 
\end{equation*}
since the symmetric subdifferential reduces to \eqref{sub} for convex functions; see 
\cite[Theorem~1.93]{mordukhovich06}. This tells us the {\em symmetric stationarity} $0\in\partial^0_M\phi(\bar x)$  yields the $C$-stationarity and hence the criticality of $\bar x$ in \eqref{dcproblem} but not vice versa.}   
\end{remark}

\section{Inexact Boosted DC Algorithm}\label{sec4}

In this section, we develop an inexact variant of the nonmonotone Boosted Difference of Convex Algorithm (nmBDCA), termed the {\em Inexact nonmonotone Boosted Difference of Convex Algorithm} (InmBDCA). This variant introduces a framework that accommodates {\em computational inexactness} while preserving convergence guarantees under certain assumptions. Specifically, the inexact approach can be viewed as an extension of the exact algorithm, with the latter being recovered when inexactness vanishes. This makes the InmBDCA particularly suitable for addressing DC programming problems, where exact computations are computationally demanding.

Before presenting the details of the InmBDCA, we first revisit the structure and key principles of the nmBDCA as outlined in \cite{ferreira2024boosted}. This review provides the necessary context for understanding the modifications introduced in the inexact variant and the theoretical basis for its convergence properties. The formal statement of the nmBDCA is as follows:

\hrule
\begin{algorithm} {\bf Non-monotone boosted DC Algorithm (nmBDCA)}
\begin{footnotesize}
\begin{description}
	\item[ Step 1.] Fix $\sigma > 0$ and $\xi \in \left( 0,1\right) $. Choose any initial point $x^{0} \in \mathbb{R}^n$ and set $k:= 0$.
	\item[ Step 2.]  Choose $w^k \in \partial{h(x^k)}$ and compute $y^k$ as the solution of the following convex subproblem:
\begin{equation}\label{subproblemnmBDCA}
\underset{ x \in \mathbb{R}^n}{\min} g(x)- \langle w^k, x - 	x^k \rangle.
\end{equation}
	\item[ Step 3.]Set $d^{k}:=y^k - x^k$. \textbf{If} {$d^k = 0$}, \textbf{then} STOP and return $x^k$. \textbf{Otherwise}, choose ${\nu_k} \geq 0$ (to be specified later), any $\bar{\lambda}_k \geq 0$, and set $\lambda_k :=\bar{\lambda}_k$. \textbf{While} $\phi(y^k + {\lambda_k}d^k ) > \phi(y^k) - \rho{\lambda_k}^2\|d^k\|^2 + {\nu_k}$ DO $\lambda_k :=\xi\lambda_k$.	
	\item[ Step 4.] Set $x^{k+1}:= y^k + {\lambda_k}d^k$; set $k:=k+1$ and go to the Step~2.
\end{description}
\hrule
\end{footnotesize}
\end{algorithm}

%

If  $g$ is differentiable and $\nu_k = 0$ for all $k \in \mathbb{N}$, the nmBDCA reduces to the Boosted Difference of Convex Algorithm (BDCA) as introduced in \cite{ARAGON2019}. The primary distinction between nmBDCA and BDCA occurs when $g$ is not differentiable. In such cases, the vector $d^k$ in Step~3 may {\em not} constitute a {\em descent direction}, making the standard Armijo linesearch unsuitable. To address this, nmBDCA employs a {\em nonmonotone Armijo-like linesearch}, which remains applicable even when $g$ is not differentiable. This adaptation allows nmBDCA to handle a broader class of problems, extending beyond the scope of BDCA and enhancing the performance of the linesearch. Another potential improvement for BDCA could involve focusing on the efficiency of solving subproblem~\ref{subproblemnmBDCA}. Computationally, obtaining an exact solution to subproblem \eqref{subproblemnmBDCA} can be challenging due to finite precision limitations, so these subproblems are often solved approximately in practice. This leads to a critical question: for a differentiable function $g$, {\em does the approximate solution $y^k$ to subproblem \eqref{subproblemnmBDCA} ensure that $d^k$ remains a descent direction} for the objective function $\phi$ at $y^k$, as it does in the exact BDCA formulation? Addressing this question is essential because if the approximate solutions maintain descent properties, then BDCA could be implemented with approximate solutions. Conversely, if the approximate solutions do not guarantee this property, then the nmBDCA approach may be required, even when $g$ is differentiable. Let us explore this issue by first making the following remark.

\begin{remark} \label{re:lps}
{\em If $y^k$ is the exact solution of subproblem~\eqref{subproblemnmBDCA} in Step 2 of nmBDCA and $d^k = y^k - x^k \neq 0$, then $\nabla{g(y^k)} = w^k \in \partial{h(x^k)}$. Consequently, $0=\|\nabla{g(y^k)} - w^k \| < \sigma \|y^k - x^k\|.$}
\end{remark}

Before addressing the aforementioned question, we need to establish the following lemma, which demonstrates that an inequality similar to the one in Remark~\ref{re:lps}  holds in the neighborhood of the exact solution $y^k$ of subproblem~\eqref{subproblemnmBDCA} in Step~2 of nmBDCA.

\begin{lemma}\label{le:ndd}     
Let $g:\mathbb{R}^n \to \mathbb{R}$ be convex  and  differentiable, and let $x \in\mathbb{R}^n$ and $w \in \mathbb{R}^n$ be given. If $\hat{y} \in \mathbb{R}^n$ is such that $\hat{y} \neq x$ and $\|\nabla{g(\hat{y})} - w\| < \sigma \|\hat{y} - x\|$, then there exists $r >0$ with $\|\nabla{g({y})} - w\| < \sigma \|{y} - x\|$ for all $y \in B(\hat{y};r)$, which is equivalent the desired inequality.
\end{lemma}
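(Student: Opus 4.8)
The plan is to establish this as a straightforward consequence of the continuity of $\nabla g$, which is guaranteed for convex differentiable functions by Remark~\ref{remark:gradcont}. The statement amounts to showing that a certain strict inequality involving $y$ is preserved in a neighborhood of the point $\hat y$ where it is known to hold. My approach will be to introduce the auxiliary function $\Phi:\mathbb{R}^n\to\mathbb{R}$ defined by $\Phi(y):=\|\nabla g(y)-w\|-\sigma\|y-x\|$ and observe that the desired inequality $\|\nabla g(y)-w\|<\sigma\|y-x\|$ is exactly $\Phi(y)<0$. The hypothesis gives $\Phi(\hat y)<0$, and the conclusion asks for an open ball $B(\hat y;r)$ on which $\Phi$ stays negative, which is precisely the statement that $\Phi$ is continuous at $\hat y$ together with the elementary fact that a continuous function negative at a point remains negative nearby.

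The key steps, in order, are as follows. First I would verify the continuity of $\Phi$ at $\hat y$. The map $y\mapsto\|y-x\|$ is continuous as the composition of the (continuous) translation $y\mapsto y-x$ with the norm, and likewise $y\mapsto\|\nabla g(y)-w\|$ is continuous because $\nabla g$ is continuous on $\mathbb{R}^n$ by Remark~\ref{remark:gradcont} and the norm is continuous; hence $\Phi$, being a difference of continuous functions, is continuous. Second, since $\Phi(\hat y)<0$, I would apply the $\varepsilon$--$\delta$ definition of continuity with $\varepsilon:=-\Phi(\hat y)>0$ to produce $r>0$ such that $|\Phi(y)-\Phi(\hat y)|<\varepsilon$ whenever $\|y-\hat y\|<r$; this forces $\Phi(y)<\Phi(\hat y)+\varepsilon=0$ for all $y\in B(\hat y;r)$, which is exactly the claimed inequality. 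Note that because $\hat y\neq x$ and the inequality degrades continuously, one may further shrink $r$ if desired so that $B(\hat y;r)$ avoids $x$ and every such $y$ satisfies $y\neq x$, keeping the statement meaningful in the sense of $d^k\neq 0$.

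The only mild subtlety—and the step I expect to require the most care—is the justification that $\nabla g$ is continuous, since the whole argument rests on it. This is not automatic for an arbitrary differentiable function but holds here because $g$ is convex: as recorded in Remark~\ref{remark:gradcont}, convexity and differentiability together force $\partial g(\bar y)=\{\nabla g(\bar y)\}$ and yield continuity of $\nabla g$ via the boundedness and cluster-point behavior of subgradients from Proposition~\ref{boundednessofy^k}. Once this is in hand, the remainder is a routine continuity argument with no real obstacle; the final remark that the inequality $\Phi(y)<0$ is literally a rewriting of $\|\nabla g(y)-w\|<\sigma\|y-x\|$ accounts for the phrase ``which is equivalent to the desired inequality'' in the statement.
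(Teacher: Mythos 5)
Your proof is correct and follows essentially the same route as the paper: both introduce the auxiliary function $\Phi$ (yours is the negative of the paper's $\Phi(y)=\sigma\|y-x\|-\|\nabla g(y)-w\|$), invoke the continuity of $\nabla g$ from Remark~\ref{remark:gradcont}, and conclude by preservation of a strict sign condition in a neighborhood. No gaps.
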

\begin{proof}
Consider the function $\Phi(y) = \sigma \|y - x\| - \|\nabla g(y) - w\|$. Since $\nabla g$ is continuous by  Remark~\ref{remark:gradcont}, the function $\Phi$ is also continuous. Therefore, if $\Phi(\hat{y}) > 0$, then there exists a radius $r > 0$ such that $\Phi(y) > 0$ for all $y \in B(\hat{y}; r)$. This directly implies the desired inequality.
\qed
\end{proof}
Finally, the following result together  with Lemma~\ref{le:ndd} addresses the above question. In particular,  it establishes that within a neighborhood around the exact solution $y^k$ of \eqref{subproblemnmBDCA}, the new direction $ \hat{d}^k = \hat{y}^k - x^k $ is a descent direction for $\phi$ at $\hat{y}^k$ provided $ \hat{y}^k $ is sufficiently close to $ y^k $.

\begin{proposition}
Let \(\phi: \mathbb{R}^n \to \mathbb{R}\) be a DC function defined as \(\phi(x) = g(x) - h(x)\), where \(g, h: \mathbb{R}^n \to \mathbb{R}\) are strongly convex functions with modulus \(\sigma > 0\), and assume that \(g\) is differentiable. Given \(x \in \mathbb{R}^n\) and \(w \in \partial h(x)\), consider any \(y \in \mathbb{R}^n\) such that \(y \neq x\) and \(\|\nabla g(y) - w\| < \sigma \|y - x\|\). Then, it holds that \(\phi'(y; y - x) < 0\).
\end{proposition}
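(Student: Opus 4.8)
The plan is to reduce the statement to a direct computation of the one-sided directional derivative of $\phi=g-h$ at $y$ in the direction $d:=y-x$, exploiting the differentiability of $g$ and the subdifferential formula for the directional derivative of the convex function $h$. First I would record that, since $g$ is differentiable and $h$ is convex, both one-sided directional derivatives exist and combine additively, so that
\begin{equation*}
\phi'(y;d)=\langle\nabla g(y),d\rangle-h'(y;d).
\end{equation*}
Invoking Proposition~\ref{nonemptiness} to write $h'(y;d)=\max_{v\in\partial h(y)}\langle v,d\rangle$, and using the compactness of $\partial h(y)$ (again Proposition~\ref{nonemptiness}) to guarantee that this maximum is attained at some $v^\ast\in\partial h(y)$, I obtain the convenient representation
\begin{equation*}
\phi'(y;d)=\min_{v\in\partial h(y)}\langle\nabla g(y)-v,\,d\rangle=\langle\nabla g(y)-v^\ast,\,y-x\rangle.
\end{equation*}

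Next I would split the vector $\nabla g(y)-v^\ast$ using the given subgradient $w\in\partial h(x)$ as $\nabla g(y)-v^\ast=(\nabla g(y)-w)+(w-v^\ast)$, and estimate the two resulting inner products separately. For the first term, the Cauchy--Schwarz inequality together with the standing hypothesis $\|\nabla g(y)-w\|<\sigma\|y-x\|$ yields the strict bound $\langle\nabla g(y)-w,\,y-x\rangle\le\|\nabla g(y)-w\|\,\|y-x\|<\sigma\|y-x\|^2$. For the second term, the monotonicity characterization of strong convexity in Proposition~\ref{equistronglyconvex}(iii), applied to $w\in\partial h(x)$ and $v^\ast\in\partial h(y)$, gives $\langle v^\ast-w,\,y-x\rangle\ge\sigma\|y-x\|^2$, hence $\langle w-v^\ast,\,y-x\rangle\le-\sigma\|y-x\|^2$.

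Finally I would add the two estimates to conclude that
\begin{equation*}
\phi'(y;d)=\langle\nabla g(y)-w,\,y-x\rangle+\langle w-v^\ast,\,y-x\rangle<\sigma\|y-x\|^2-\sigma\|y-x\|^2=0,
\end{equation*}
which is exactly the claimed descent property. I do not anticipate a genuine obstacle here; the argument is a clean combination of two inequalities. The only point demanding care is bookkeeping of strictness: the strict inequality must be sourced from the hypothesis through Cauchy--Schwarz, while the strong-convexity monotonicity contributes only the non-strict bound, so the sum is strictly negative precisely because $y\neq x$ forces $\|y-x\|>0$ and the first estimate is strict.
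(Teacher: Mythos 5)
Your argument is correct and follows essentially the same route as the paper's proof: express $\phi'(y;y-x)$ via the gradient of $g$ and the max-formula for $h'(y;\cdot)$, split $\nabla g(y)-v$ through the given $w\in\partial h(x)$, and combine Cauchy--Schwarz with the strong monotonicity of $\partial h$ from Proposition~\ref{equistronglyconvex}(iii). The only cosmetic difference is that you work with the maximizer $v^\ast$ and an equality, while the paper keeps an inequality valid for every $v\in\partial h(y)$; the estimates and the source of strictness are identical.
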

\begin{proof}
Since the functions  \(g, h\) are convex,  for any $y \in \mathbb{R}^n$ we have
\begin{equation}\label{eq:phi1}
\phi'(y; y-x) = g'(y;y-x) - h'(y;y-x).
\end{equation}
Differentiability of $g$ yields $g'(y;y-x) = \langle \nabla{g(y)}, y-x\rangle$. In addition, Proposition~\ref{nonemptiness} implies that $h'(y;y-x) = \max_{s \in \partial{h(y)}}{\langle s , y - x\rangle}$. Thus it follows from \eqref{eq:phi1}  that 
\begin{equation}\label{eq:phi2}
\phi'(y; y - x) \leq \langle \nabla g(y), y - x \rangle - \langle v, y - x \rangle = \langle \nabla g(y) - v, y - x \rangle\;\mbox{ for all }\;v \in \partial h(y).
\end{equation}
On the other hand, $\langle \nabla{g(y)} - v, y -x\rangle = \langle \nabla{g(y)}  - w, y - x\rangle + \langle w - v, y - x\rangle$. Since $w \in \partial{h(x)}$ and $\partial h$ is strongly monotone with modulus $\sigma > 0$, Proposition~\ref{equistronglyconvex} implies that 
$\langle w - v, y - x \rangle \leq - \sigma \|y -x\|^2.$  Therefore, combining this inequality with the previous equality tells us that
\begin{equation}\label{eq:phi6}
\langle \nabla{g(y)} - v, y - x \rangle \leq \|\nabla{g(y)} - w\|\cdot\|y - x\| - \sigma\|y - x\|^2.
\end{equation}
Combining further \eqref{eq:phi2} with \eqref{eq:phi6} and using the fact that $\|\nabla{g(y)} - w\|<\sigma\|y - x\|$ bring us to 
\begin{eqnarray*}
\phi'(y;y-x) &\leq& \|\nabla{g(y)} - w\|\cdot \|y - x\| - \sigma\|y - x\|^2 < 0, 
\end{eqnarray*}
which concludes the proof of the proposition.
\qed
\end{proof}

To ensure the {\em computational efficiency} of the nmBDCA, we propose the Inexact nonmonotone Boosted Difference of Convex Algorithm (InmBDCA) as detailed in Algorithm~2 below. This approach can be viewed as an inexact variant of BDCA from \cite{ARAGON2017} in {\em two key aspects}. First, subproblem \eqref{subproblemnmBDCA} is solved {\em approximately} to obtain \( y^k \), which determines the search direction with a  relative error tolerance. Second, an {\em inexact Armijo-type} linesearch employing a nonmonotone scheme is used to select the step size for the next iteration. To the best of our knowledge, such inexact versions of BDCA have not been previously investigated in the literature. The motivation for this paper is to establish theoretical foundations for the computational behavior of BDCA and nmBDCA under {\em finite precision}. The formal statement of the InmBDCA is as follows:
\hrule
\begin{algorithm} {\bf Inexact nonmonotone Boosted Difference of Convex Algorithm (InmBDCA)}
\begin{footnotesize}
\begin{description}
	\item[ Step 1.] Fix $\rho > 0$, $\theta \in  \left[ 0, \frac{\sigma}{2}\right)$, $\beta \in \left( 0,1\right)$, and a sequence $(\varepsilon_k)_{k\in \mathbb{N}} \subset \mathbb{R}_+$. Choose any initial point $x^{0} \in \mathbb{R}^n$ and set $k:= 0$.
	\item[ Step 2.]  Choose $w^k \in \partial_{\varepsilon_k}{h(x^k)}$ and compute $\left( y^k, \xi^k \right)$ a solution of the following subproblem
\begin{align}
\xi^k \in \partial{g(y^k)},&	\label{2inexactnmBDCA}\\
\|w^k - \xi^k\| \leq \theta\|y^k - x^k\|.&	\label{3inexactnmBDCA}
\end{align}
	\item[ Step 3.]Set $d^{k}:=y^k - x^k$. \textbf{If} {$d^k = 0$} \textbf{then} STOP and return $x^k$. \textbf{Otherwise}, choose ${\nu_k} \geq 0$ (to be specified later), any $\bar{\lambda}_k \geq 0$, and set $\lambda_k :=\bar{\lambda}_k$. \textbf{While} $\phi(y^k + {\lambda_k}d^k ) > \phi(y^k) - \rho{\lambda_k}^2\|d^k\|^2 + {\nu_k}$ DO $\lambda_k :=\xi\lambda_k$.	
	\item[ Step 4.] Set $x^{k+1}:= y^k + {\lambda_k}d^k$; set $k:=k+1$ and go to the Step 2.
\end{description}
\hrule
\end{footnotesize}
\end{algorithm}


Let us describe the main features of InmBDCA. First, it is important to establish that InmBDCA is {\em well-defined}. The objective function of problem  \eqref{subproblemnmBDCA} is strongly convex, which guarantees the existence of a solution $y^k$. Therefore, we have $0\in \partial{g(y^k)}- w^k$ implying that $w^k\in \partial{g(y^k)}$. Hence the exact solution $y^k$ of \eqref{subproblemnmBDCA} and $\xi^k=w^k$ satisfy \eqref{2inexactnmBDCA} and \eqref{3inexactnmBDCA}, respectively, ensuring that the subproblem in InmBDCA has a solution. In addition, the nonmonotone linesearch can be conducted due to the condition
$$
\lim_{\lambda \to 0^+} (\phi(y^k + {\lambda}d^k ) - \phi(y^k) + \rho{\lambda}^2\|d^k\|^2 - {\nu_k})=- {\nu_k}<0 
$$
and the continuity of $\phi$. This verifies that InmBDCA is well-defined.  {\it Throughout this section, $(x^k)_{k\in \mathbb{N}}$  denotes the  sequence generated by Algorithm~InmBDCA.} To allow more   flexibility of the algorithm, we consider $w^k \in \partial_{\varepsilon_k} h(x^k)$ instead of $w^k \in \partial h(x^k)$, as this does not introduce additional complications in the analysis of InmBDCA. It is noteworthy that the approximate subdifferential \eqref{e-sub} could be employed at this stage. Moreover, we address the decision to maintain the standard subdifferential in \eqref{2inexactnmBDCA} rather than using the approximate subdifferential. Although employing the approximate subdifferential in this context is feasible, we opt for the standard subdifferential to ensure a more straightforward analysis by taking into account that \eqref{2inexactnmBDCA} and \eqref{3inexactnmBDCA} already represent approximations of the solution to \eqref{subproblemnmBDCA}.

\begin{remark}
{\rm  In Algorithm~InmBDCA, setting $\theta = 0$ and $\varepsilon_k = 0$ for all $k\in \mathbb{N}$, we have that $w^k \in \partial h(x^k)$, and  that \eqref{3inexactnmBDCA} yields $w^k = \xi^k$. This tells us that the subproblem in Step~2 becomes to compute $y^k$ such that $w^k \in \partial g(y^k)$, i.e.,
$$
y^k = \arg\min_{x\in \mathbb{R}^n}g(x) - \langle w^k, x - x^k \rangle,
$$
which corresponds to solving \eqref{subproblemnmBDCA}. Therefore, Algorithm~InmBDCA is an inexact version of Algorithm~nmBDCA in the sense that:
\begin{itemize}
\item[\bf(i)] Instead of computing a subgradient of $h$ at $x^k$, in InmBDCA we compute an $\varepsilon_k$-approximate subgradient of $h$ at $x^k$.

\item[\bf(ii)] Computing $y^k$ as a solution to the convex subproblem \eqref{subproblemnmBDCA} is equivalent to computing $y^k$ with $w^k \in \partial g(y^k)$. In InmBDCA, we compute $y^k$ such that there exists $\xi^k\in \partial g(y^k)$ sufficiently close to $w^k$ as in \eqref{3inexactnmBDCA}. 
\end{itemize}}
\end{remark}

\begin{remark}
{\rm By using the Cauchy-Schwarz inequality,  we have  $\langle \xi^k - w^k, y^k - x^k \rangle \leq \|\xi^k - w^k\|\|y^k - x^k\|$, which being combined with  \eqref{3inexactnmBDCA} yields 
$$\langle \xi^k - w^k, y^k - x^k \rangle \leq \theta\|y^k-x^k\|^2 \leq \displaystyle\frac{\sigma}{2}\|y^k-x^k\|^2.
$$
This allows us to conclude that
$$
\langle \xi^k, x^k - y^k \rangle + ({\sigma}/{2})\|y^k - x^k\|^2 \geq - \langle w^k, y^k - x^k \rangle. 
$$
Since $\xi^k \in \partial{g(y^k)}$ and $g$ is strongly convex with modulus $\sigma > 0$, it follows that
$
g(x^k) \geq g(y^k) + \langle \xi^k , x^k - y^k \rangle + (\sigma/2)\|x^k - y^k\|^2,
$
which  being combined with the previous inequality yields
\begin{equation}\label{1inexactnmBDCA}
g(x^k) \geq g(y^k)-\langle w^k, y^k - 	x^k \rangle.
\end{equation}}
\end{remark}

The following proposition presents a version of the descent lemma for InmBDCA, which extends the descent lemmas for DCA and BDCA that have been extensively covered in various papers; see, e.g., \cite[Proposition 3]{ARAGON2017}.

\begin{proposition}\label{descent} If $d^k=0$, then $x^k$ is a $\varepsilon_k$-critical point of $\phi$. In addition, we have the estimate $\phi(y^k) \leq \phi(x^k) - \left({\sigma}/{2} - \theta\right)\|d^k\|^2 + \varepsilon_k$ for all $k\in\mathbb N$.
\end{proposition}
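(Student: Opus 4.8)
The plan is to treat the two assertions separately, noting that both are elementary consequences of the strong convexity of $g$ together with the defining inequalities \eqref{2inexactnmBDCA}--\eqref{3inexactnmBDCA} of the inexact subproblem and the inclusion $w^k\in\partial_{\varepsilon_k}h(x^k)$ from Step~2.

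For the criticality claim I would argue as follows. If $d^k=0$, then $y^k=x^k$, so the right-hand side of \eqref{3inexactnmBDCA} vanishes and forces $\xi^k=w^k$. Since \eqref{2inexactnmBDCA} gives $\xi^k\in\partial g(y^k)=\partial g(x^k)$, while Step~2 provides $w^k\in\partial_{\varepsilon_k}h(x^k)$, the common vector $w^k=\xi^k$ lies in $\partial g(x^k)\cap\partial_{\varepsilon_k}h(x^k)$. This is exactly the inexact analogue of Definition~\ref{crit}, with the $\varepsilon_k$-subdifferential \eqref{e-sub} replacing $\partial h$, so $x^k$ is an $\varepsilon_k$-critical point.

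For the descent estimate the key is to keep the full strong-convexity gain on $g$ rather than prematurely invoking \eqref{1inexactnmBDCA}. First I would apply Proposition~\ref{equistronglyconvex}(ii) to $g$ at $y^k$ with subgradient $\xi^k\in\partial g(y^k)$ and test point $x^k$, obtaining $g(x^k)\ge g(y^k)+\langle\xi^k,x^k-y^k\rangle+(\sigma/2)\|d^k\|^2$, that is, $g(y^k)-g(x^k)\le\langle\xi^k,y^k-x^k\rangle-(\sigma/2)\|d^k\|^2$. Next I would use the defining inequality \eqref{e-sub} of the $\varepsilon_k$-subdifferential for $w^k\in\partial_{\varepsilon_k}h(x^k)$ evaluated at $z=y^k$, which yields $h(x^k)-h(y^k)\le-\langle w^k,y^k-x^k\rangle+\varepsilon_k$. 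Adding these and writing $\phi(y^k)-\phi(x^k)=[g(y^k)-g(x^k)]+[h(x^k)-h(y^k)]$ collapses the inner products into the single cross term $\langle\xi^k-w^k,y^k-x^k\rangle$, leaving $\phi(y^k)-\phi(x^k)\le\langle\xi^k-w^k,y^k-x^k\rangle-(\sigma/2)\|d^k\|^2+\varepsilon_k$.

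To finish I would bound the cross term by Cauchy--Schwarz and the relative-error condition \eqref{3inexactnmBDCA}: $\langle\xi^k-w^k,y^k-x^k\rangle\le\|\xi^k-w^k\|\,\|d^k\|\le\theta\|d^k\|^2$. Substituting gives $\phi(y^k)-\phi(x^k)\le\theta\|d^k\|^2-(\sigma/2)\|d^k\|^2+\varepsilon_k=-(\sigma/2-\theta)\|d^k\|^2+\varepsilon_k$, which is the claimed estimate. The only place requiring care --- and the main conceptual obstacle --- is the bookkeeping of signs and the decision to retain the $-(\sigma/2)\|d^k\|^2$ term coming from $g$ while spending only $\theta\|d^k\|^2$ on the cross term; this is precisely what produces the positive coefficient $\sigma/2-\theta$, meaningful because $\theta<\sigma/2$ by Step~1. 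I would also remark that directly substituting \eqref{1inexactnmBDCA}, which already absorbs the strong-convexity modulus into the cross term, would discard the $\|d^k\|^2$ contribution and yield only the weaker bound $\phi(y^k)\le\phi(x^k)+\varepsilon_k$.
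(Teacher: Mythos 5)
Your proof is correct and follows essentially the same route as the paper's: the criticality claim via $y^k=x^k$ forcing $\xi^k=w^k\in\partial g(x^k)\cap\partial_{\varepsilon_k}h(x^k)$, and the descent estimate by adding the strong-convexity inequality for $g$ at $y^k$ (with subgradient $\xi^k$) to the $\varepsilon_k$-subgradient inequality for $h$, then bounding the cross term $\langle\xi^k-w^k,y^k-x^k\rangle$ by Cauchy--Schwarz and \eqref{3inexactnmBDCA}. Your closing remark about why one must not substitute \eqref{1inexactnmBDCA} prematurely is a sensible observation but not needed for the argument.
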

\begin{proof} 
Recall that $d^k:= y^k - x^k$ and take $w^k \in \partial_{\varepsilon_k}{h(x^k)}$. By \eqref{3inexactnmBDCA}, we have $\|w^k - \xi^k\| \leq \theta \|y^k - x^k\|$.  If $d^k=0$, then $y^k = x^k$, and thus $w^k = \xi^k \in \partial{g(y^k)} = \partial{g(x^k)} \subset \partial_{\varepsilon_k}{g(x^k)} $. Therefore, $\partial_{\varepsilon_k}{g(x^k)}\cap\partial_{\varepsilon_k}{h(x^k)} \neq \varnothing$, which justifies the first statement.

To verify the second statement,  take  $w^k \in \partial_{\varepsilon_k}{h(x^k)}$ and get
$h(y^k) \geq h(x^k) + \langle w^k, y^k - x^k \rangle - \varepsilon_k$. Since the function $g$ is strongly convex with modulus $\sigma>0$, it follows from \eqref{2inexactnmBDCA} and Proposition~\ref{equistronglyconvex} that
$g(x^k) \geq g(y^k) +\langle \xi^k, x^k - y^k \rangle +({\sigma}/{2})\|x^k - y^k\|^2$. Summing up the last two inequalities and providing elementary transformations bring us to 
\begin{equation} \label{inmBDCAeq3}
\quad\quad g(y^k)-h(y^k) \leq g(x^k) - h(x^k) + \langle \xi^k -  w^k, y^k - x^k \rangle - \displaystyle\frac{\sigma}{2}\|y^k - x^k\|^2 + \varepsilon_k.
\end{equation}
On the other hand, the Cauchy-Schwarz inequality yields
$\langle \xi^k - w^k, y^k - x^k \rangle \leq \| \xi^k - w^k\|\|y^k - x^k\|$.
Since $\|w^k - \xi^k\| \leq \theta\|y^k - x^k\|$, it follows that
\begin{equation}\label{inmBDCAeq4}
\langle \xi^k - w^k, y^k - x^k \rangle \leq \theta \|y^k - x^k\|^2.
\end{equation}
Combining \eqref{inmBDCAeq3} with \eqref{inmBDCAeq4} and using $\phi(x) = g(x) - h(x)$, for all $x \in\mathbb{R}^n$, ensures that
$$
\phi(y^k) \leq  \phi(x^k) - \left(\displaystyle\frac{\sigma}{2} - \theta\right)\|y^k - x^k\|^2 + \varepsilon_k\notag,
$$
which completes the proof of the proposition due to $d^k = y^k -x^k$.
\qed
\end{proof}

The above statements confirm that a nonmonotone linesearch can be performed in InmBDCA. In the next important result, we extend the obtained assertions by providing a {\em constructive estimate} for the interval within which the stepsize ${\lambda_k}$  can be chosen. This theorem also serves as a new proof of the well-defined nature of the linesearch in InmBDCA.

\begin{theorem} \label{pr:wdef}
Assume that $d^k \neq 0$ and ${\nu_k}>0$ for each $k\in \mathbb{N}$. Then we have 
$$
\hat{\tau}_k:= {{\nu_k}}/{(g(y^k + d^k) + g(x^k)-2g(y^k) + \varepsilon_k )}>0,
$$
$$
\phi(y^k + \lambda d^k) \leq \phi(y^k) - \rho{\lambda}^2\|d^k\|^2 + {\nu_k}\;\mbox{ for all }\;\lambda \in \left( 0, \tau_k \right],
$$
where $\tau_k:= \min \{1, \hat{\tau}_k, {\sigma}/{\rho} \}$. Consequently, Algorithm~{\rm 2} is well-defined.
\end{theorem}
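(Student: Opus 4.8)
The plan is to reduce everything to a single ``master'' estimate
$\phi(y^k+\lambda d^k)\le \phi(y^k)+\lambda\big(g(y^k+d^k)+g(x^k)-2g(y^k)+\varepsilon_k\big)-\sigma\lambda\|d^k\|^2$
valid for all $\lambda\in[0,1]$, since the three quantities defining $\tau_k=\min\{1,\hat\tau_k,\sigma/\rho\}$ then match the three terms on the right. First I would settle positivity of $\hat\tau_k$: as $\nu_k>0$ it suffices that the denominator is positive, and viewing $y^k$ as the midpoint of $x^k$ and $y^k+d^k=2y^k-x^k$, strong convexity of $g$ gives $g(y^k+d^k)+g(x^k)-2g(y^k)\ge\sigma\|d^k\|^2>0$ because $d^k\neq0$; adding $\varepsilon_k\ge0$ preserves positivity, so $\hat\tau_k>0$.

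For the master estimate I would fix one subgradient $s\in\partial h(y^k)$ and combine four ingredients. (a) The strong-convexity interpolation of $g$ on $[y^k,y^k+d^k]$, which for $\lambda\in[0,1]$ upper-bounds $g(y^k+\lambda d^k)$ and contributes a curvature term $-\tfrac{\sigma}{2}\lambda(1-\lambda)\|d^k\|^2$. (b) The strong-convexity subgradient inequality for $h$ at $y^k$ tested at $y^k+\lambda d^k$ (Proposition~\ref{equistronglyconvex}(ii)), which upper-bounds $-h(y^k+\lambda d^k)$ and contributes $-\tfrac{\sigma}{2}\lambda^2\|d^k\|^2$; these two curvature terms telescope to exactly $-\tfrac{\sigma}{2}\lambda\|d^k\|^2$, leaving $\phi(y^k)+\lambda\big(g(y^k+d^k)-g(y^k)-\langle s,d^k\rangle\big)-\tfrac{\sigma}{2}\lambda\|d^k\|^2$. (c) I then lower-bound $\langle s,d^k\rangle$ by pairing the strong-convexity inequality for $s\in\partial h(y^k)$ tested at $x^k$ with the $\varepsilon_k$-subgradient inequality for $w^k\in\partial_{\varepsilon_k}h(x^k)$ tested at $y^k$; adding these and cancelling yields $\langle s,d^k\rangle\ge\langle w^k,d^k\rangle+\tfrac{\sigma}{2}\|d^k\|^2-\varepsilon_k$, which produces a second $-\tfrac{\sigma}{2}\lambda\|d^k\|^2$. (d) Finally I replace $-\langle w^k,d^k\rangle$ by $g(x^k)-g(y^k)$ through \eqref{1inexactnmBDCA}, which converts the linear bracket into the second difference $g(y^k+d^k)+g(x^k)-2g(y^k)$. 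Collecting terms gives the master estimate with the full coefficient $\sigma$.

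The endgame is then immediate. For $\lambda\le\hat\tau_k$ the linear term obeys $\lambda\big(g(y^k+d^k)+g(x^k)-2g(y^k)+\varepsilon_k\big)\le\nu_k$; for $\lambda\le\sigma/\rho$ one has $\sigma\lambda\ge\rho\lambda^2$, hence $-\sigma\lambda\|d^k\|^2\le-\rho\lambda^2\|d^k\|^2$; and $\lambda\le1$ is exactly what legitimizes the interpolation in (a). Thus for every $\lambda\in(0,\tau_k]$ the linesearch condition $\phi(y^k+\lambda d^k)\le\phi(y^k)-\rho\lambda^2\|d^k\|^2+\nu_k$ holds. Since $\tau_k>0$ and the backtracking shrinks the trial stepsize by a fixed factor in $(0,1)$, after finitely many reductions the stepsize enters $(0,\tau_k]$ and is accepted, so Algorithm~2 is well-defined.

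I expect the curvature bookkeeping in the master estimate to be the main obstacle: strong convexity of $h$ must be invoked \emph{twice} (in (b) at $y^k$ and in (c) through the monotonicity pairing) and strong convexity of $g$ once (in (a)), so that three $\tfrac{\sigma}{2}$ contributions assemble into precisely $-\sigma\lambda\|d^k\|^2$. This exact coefficient is what makes the threshold $\sigma/\rho$ correct; using only plain convexity in (a) or (b), or double-counting a curvature bound, would yield a strictly smaller descent coefficient and force a more restrictive step bound. Equal care is needed so that the approximation error enters solely as the additive $\varepsilon_k$ inside the denominator of $\hat\tau_k$, which is why the $\varepsilon_k$-subgradient is used only in step (c).
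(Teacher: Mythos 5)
Your proposal is correct and follows essentially the same route as the paper: the same master estimate $\phi(y^k+\lambda d^k)-\phi(y^k)\le\lambda\bigl(g(y^k+d^k)+g(x^k)-2g(y^k)+\varepsilon_k\bigr)-\sigma\lambda\|d^k\|^2$ on $[0,1]$, assembled from the identical four ingredients (strong-convexity interpolation of $g$, the subgradient inequality for $h$ at $y^k$, the pairing with the $\varepsilon_k$-subgradient of $h$ at $x^k$, and inequality \eqref{1inexactnmBDCA}), followed by the same three-way split of $\tau_k$. The only cosmetic difference is that you justify $g(y^k+d^k)+g(x^k)-2g(y^k)\ge\sigma\|d^k\|^2$ via the midpoint form of strong convexity rather than by summing two subgradient inequalities at $y^k$, which is equivalent.
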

\begin{proof}
Take $v\in \partial{h(y^k)}$. Since $h$ is strongly convex with modulus $\sigma>0$ and $d^k = y^k -x^k$,  it follows from Proposition~\ref{equistronglyconvex} that 
\begin{equation}\label{0inmBDCAeq1}
h(y^k +\lambda d^k) \geq h(y^k) + \lambda \langle v, d^k \rangle + \displaystyle\frac{\sigma}{2}\lambda^2\|d^k\|^2\;\mbox{ for all }\;\lambda \in \mathbb{R}.
\end{equation}
In particular, for $\lambda = -1$ we get the inequality
$$
h(x^k) = h(y^k - y^k + x^k) \geq h(y^k) + \langle -v, d^k \rangle + ({\sigma}/{2})\|d^k\|^2.
$$  On the other hand, it follows  from $w^k \in \partial_{\varepsilon_k}{h(x^k)}$ that
$h(y^k) \geq h(x^k) + \langle w^k, y^k - x^k \rangle - \varepsilon_k$, which being combined with the two previous inequality gives us $\langle v, d^k \rangle \geq  \langle w^k, y^k - x^k \rangle + ({\sigma}/{2})\|d^k\|^2 - \varepsilon_k$. Therefore, we arrive at the estimate
\begin{equation}\label{0inmBDCAeq4}
\lambda \langle v, d^k \rangle \geq  \lambda \langle w^k, y^k - x^k \rangle + \displaystyle\frac{\sigma}{2}\lambda \|d^k\|^2 - \lambda \varepsilon_k\;\mbox{ for all }\;\lambda \geq 0.
\end{equation}
Getting together  \eqref{0inmBDCAeq1} and \eqref{0inmBDCAeq4} leads us to
$$
h(y^k + \lambda d^k) \geq h(y^k) + \lambda \langle w^k, y^k - x^k \rangle + \displaystyle\frac{\sigma}{2}\lambda \|d^k\|^2 - \lambda \varepsilon_k + \displaystyle\frac{\sigma}{2}\lambda^2\|d^k\|^2\;\mbox{ for all }\;\lambda \geq 0.
$$
This yields, by taking into account that $y^k$ satisfies \eqref{1inexactnmBDCA}, the estimate
\begin{equation}\label{0inmBDCAeq6}
- \left( h(y^k + \lambda d^k) - h(y^k) \right)\leq \lambda \left(g(x^k) - g(y^k) + \varepsilon_k \right) - \displaystyle\frac{\sigma}{2}\lambda(1+\lambda)\|d^k\|^2\, 
\end{equation}
for all $\lambda \geq 0$. On the other hand, the strong convexity of $g$ with modulus $\sigma > 0$ ensures that
\begin{equation}\label{0inmBDCAeq7}
\begin{array}{ll}
g(y^k + \lambda d^k) - g(y^k) &= g\left( \lambda(y^k+d^k) + (1-\lambda)y^k \right) - g(y^k)\\
&\leq \lambda g(y^k + d^k) + (1-\lambda)g(y^k) - \displaystyle\frac{\sigma}{2}\lambda(1-\lambda)\|d^k\|^2 - g(y^k)\\
&= \lambda\left( g(y^k + d^k) - g(y^k)\right) - \displaystyle\frac{\sigma}{2}\lambda(1-\lambda)\|d^k\|^2
\end{array}
\end{equation}
for all $\lambda \in \left[0,1\right]$. Combining \eqref{0inmBDCAeq6} and \eqref{0inmBDCAeq7} with $\phi(y^k + \lambda d^k) - \phi(y^k) = \left( g(y^k + \lambda d^k) - g(y^k) \right) - \left( h(y^k + \lambda d^k) - h(y^k) \right)$, we obtain
\begin{align*}
\phi(y^k + \lambda d^k) - \phi(y^k) \leq \lambda \left(g(y^k + d^k) + g(x^k) - 2g(y^k) + \varepsilon_k \right) - \sigma\lambda\|d^k\|^2 
\end{align*}
for all $\lambda \in \left[0,1\right]$.
Moreover, it follows from Proposition~\ref{equistronglyconvex} that
$$
g(y^k + d^k) \geq g(y^k) + \langle u, d^k \rangle + \displaystyle\frac{\sigma}{2}\|d^k\|^2\quad \text{and} \quad g(x^k) \geq g(y^k) + \langle u, -d^k \rangle + \displaystyle\frac{\sigma}{2}\|d^k\|^2
$$
for all $u \in \partial{g(y^k)}$, where $d^k=y^k -x^k\ne 0$. This implies that
$$
g(y^k + d^k) + g(x^k) - 2g(y^k) \geq \sigma\|d^k\|^2 > 0.
$$ 
Since ${\nu_k} > 0$ and $\varepsilon_k\geq 0$, we get $\hat{\tau}_k:= {{\nu_k}}/{(g(y^k + d^k) + g(x^k)-2g(y^k) + \varepsilon_k)}\\>0$ as stated in the theorem. Furthermore, it follows that
$$
0 < \lambda\left( g(y^k + d^k) + g(x^k) - 2g(y^k) + \varepsilon_k \right) \leq {\nu_k}\;\mbox{ for all }\;\lambda \in \left( 0, \hat{\tau}_k \right].
$$
Observe also that for $\lambda > 0$ we have the estimate
$$
- \sigma \lambda \|d^k\|^2 \leq - \rho \lambda^2 \|d^k\|^2\;\mbox{ if only if }\;\lambda \leq  {\sigma}/{\rho}.
$$
Therefore, setting $\tau_k := \min \left\{1,\hat{\tau}_k, {\sigma}/{\rho} \right\}$ brings us to the estimate
$$
\phi(y^k + \lambda d^k) \leq \phi(y^k) - \rho\lambda^2\|d^k\|^2 + {\nu_k}\;\mbox{ for all }\;\lambda \in \left( 0, \tau_k \right].
$$
Since $\beta \in (0,1)$ in Algorithm~2, it follows that $\displaystyle\lim_{j \to \infty}{\beta^j \bar{\lambda}_k} = 0$, which tells us that there exists $j \in \mathbb{N}$ sufficiently large such that $\lambda_k := \beta^j \bar{\lambda}_k$ satisfies 
$$
\phi(y^k + \lambda_k d^k) \leq \phi(y^k) - \rho{\lambda_k}^2\|d^k\|^2 + v^k.
$$
This justifies that the linesearch in Step~3 is well-defined. Finally, setting $x^{k+1} :=y^k + \lambda_k d^k$ ensures that Step~4  of the algorithm also well-defined, which completes the proof of the theorem.
\qed
\end{proof}

The next result verifies the descent property of Algorithm~2 and provides explicit estimates in terms of the given data.

\begin{proposition}\label{descent1} Assume that $d^k\ne 0$ and $\nu_k>0$ for all $k\in\mathbb N$. Then
$$
\phi(x^{k+1}) \leq \phi(x^k) - \big({\sigma}/{2} -\theta + \rho\lambda^{2}_k\big)\|d^k\|^2  + {\nu_k} + \varepsilon_k\;\mbox{ for each }\;k \in \mathbb{N}.
$$
As a consequence, we have the estimate
\begin{equation}\label{ineqdescent1}
\left({\sigma}/{2} -\theta \right)\|d^k\|^2 \leq \phi(x^k)-\phi(x^{k+1}) + v _k + \varepsilon_k\;\mbox{ whenever }\;k \in \mathbb{N}.
\end{equation}
\end{proposition}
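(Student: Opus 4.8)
The plan is to obtain the inequality by telescoping the two descent estimates already available for the two sub-steps of a single InmBDCA iteration: the subproblem step $x^k\mapsto y^k$ and the linesearch step $y^k\mapsto x^{k+1}$. The first step is controlled by Proposition~\ref{descent} and the second by the stopping criterion of the linesearch, whose termination is guaranteed by Theorem~\ref{pr:wdef}. So the whole proof is a short chaining argument with no genuinely new estimate to derive.

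Concretely, I would first invoke Theorem~\ref{pr:wdef} to ensure that, under $d^k\neq 0$ and ${\nu_k}>0$, the linesearch in Step~3 terminates with an accepted step size $\lambda_k$ satisfying the exit condition
$$
\phi(y^k + \lambda_k d^k) \leq \phi(y^k) - \rho\lambda_k^2\|d^k\|^2 + {\nu_k}.
$$
Since Step~4 sets $x^{k+1}:=y^k+\lambda_k d^k$, this reads $\phi(x^{k+1}) \leq \phi(y^k) - \rho\lambda_k^2\|d^k\|^2 + {\nu_k}$. Next I would substitute the descent estimate from Proposition~\ref{descent}, namely $\phi(y^k) \leq \phi(x^k) - \left({\sigma}/{2} - \theta\right)\|d^k\|^2 + \varepsilon_k$, into the right-hand side. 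Collecting the two coefficients of $\|d^k\|^2$ yields exactly
$$
\phi(x^{k+1}) \leq \phi(x^k) - \big({\sigma}/{2} -\theta + \rho\lambda^{2}_k\big)\|d^k\|^2 + {\nu_k} + \varepsilon_k,
$$
which is the first assertion.

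For the consequence \eqref{ineqdescent1}, I would simply transpose terms to get $\big({\sigma}/{2} -\theta + \rho\lambda^{2}_k\big)\|d^k\|^2 \leq \phi(x^k)-\phi(x^{k+1}) + {\nu_k} + \varepsilon_k$, and then discard the nonnegative summand $\rho\lambda_k^2\|d^k\|^2$ (using $\rho>0$, $\lambda_k\ge 0$) to obtain the weaker but cleaner bound with the constant coefficient $\sigma/2-\theta$. There is essentially no obstacle here: the only point worth flagging is that the whole argument presupposes the well-definedness of $\lambda_k$, which is precisely what Theorem~\ref{pr:wdef} supplies, and that the coefficient $\sigma/2-\theta$ is strictly positive because $\theta\in[0,\sigma/2)$ by Step~1 of the algorithm — a fact that is not needed for the derivation itself but is what makes \eqref{ineqdescent1} useful for the forthcoming convergence analysis.
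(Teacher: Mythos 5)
Your proposal is correct and follows essentially the same route as the paper: combine the descent estimate of Proposition~\ref{descent} for the step $x^k\mapsto y^k$ with the linesearch exit inequality (whose termination is guaranteed by Theorem~\ref{pr:wdef}) for the step $y^k\mapsto x^{k+1}$, then drop the nonnegative term $\rho\lambda_k^2\|d^k\|^2$ to get \eqref{ineqdescent1}. If anything, your version is slightly more careful than the paper's in citing the accepted-step exit condition of the linesearch rather than the interval estimate of Theorem~\ref{pr:wdef} directly.
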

\begin{proof}
For each $k \in \mathbb{N}$, Proposition~\ref{descent} guarantees that $\phi(y^k) \leq \phi(x^k) - \left({\sigma}/{2} - \theta\right)\|d^k\|^2 + \varepsilon_k$. Combining this with Theorem~\ref{pr:wdef} for $\lambda=\lambda_k$ and setting $x^{k+1}:= y^k + \lambda_k d^k$ yields the first inequality of the proposition. To verify the second one, observe that    $\rho{\lambda}_{k}^{2} \geq 0$ for all $k \in \mathbb{N}$, which ensures that $\left( {\rho}/{2} -\theta \right)\|d^k\|^2 \leq \left({\sigma}/{2} -\theta + \rho\lambda^{2}_k\right)$, and thus completes the proof.
\qed
\end{proof}

\section{Strategies for Choosing the Measure of Nonmonotonicity}\label{sec5}

In this section, we introduce strategies for selecting the terms $({\nu_k})_{k\in\mathbb{N}}$ in the Armijo stepsize of Algorithm~2 at Step~3. This term can be viewed as a {\em  measure of nonmonotonicity}, which allows us to maintain descent iterations in our Inexact BDCA version with both nondifferentiable functions in the DC decomposition. In \cite{ferreira2024boosted}, various strategies were extensively discussed that can be either directly applied to our inexact context, or adapted as needed. We will recall  these strategies with the necessary adjustments in our setting.

\begin{itemize}
\item[\textbf{(A1)}] Given $\Delta_{min} \in \left[ 0,1\right)$, choose any $\nu_0\ge 0$ and define the sequence $({\nu_k})_{k\in\mathbb{N}}$ as follows: for each $\Delta_{k+1} \in \left[\Delta_{min}, 1\right]$, the iterate $\nu_{k+1}$ satisfies the condition:
\begin{equation}\label{S1}
0 \leq \nu_{k+1} \leq \left(1 - \Delta_{k+1}\right)\left(\phi(x^{k}) - \phi(x^{k+1}) + {\nu_k} + \varepsilon_k\right)\;\mbox{ whenever }\;k\in \mathbb{N}.
\end{equation}
\item[\textbf{(A2)}] The sequence $({\nu_k})_{k\in\mathbb{N}}$ is such that $\sum_{k=0}^{\infty} {\nu_k} < \infty$.
\item[\textbf{(A3)}] Let $({\nu_k})_{k\in\mathbb{N}}$ be such that for every $\delta > 0$, there exists $k_0 \in \mathbb{N}$ with ${\nu_k}\leq\delta\|d^k\|^2$ for all $k\geq k_0$.
\end{itemize}

Let us discuss the listed strategies.

\begin{remark}\label{re:equi}
{\rm Since $\theta \in\left[ 0, {\sigma}/{2}\right)$ in Algorithm~2, it follows from Proposition~\eqref{descent1} that $0 \leq ({\sigma}/{2}-\theta)\|d^k\|^2 \leq \phi(x^k) - \phi(x^{k+1}) + {\nu_k} +\varepsilon_k$ for all $k \in \mathbb{N}$, and thus we can take $\nu_{k+1} \geq 0$ satisfying \eqref{S1}. In particular, when $\varepsilon_k=0$ for all $k \in \mathbb{N}$, condition \eqref{S1} recovers the strategy (S1) from \cite{ferreira2024boosted}. Moreover, if  $({\nu_k})_{k\in\mathbb{N}}$ satisfies \textbf{(A1)} with $\Delta_{min}>0$ and $\sum_{k=0}^{\infty}\varepsilon_k< \infty$, then  $({\nu_k})_{k\in\mathbb{N}}$ satisfies \textbf{(A2)}. Indeed, it follows from \eqref{S1} that
\begin{equation}\label{desDelta}
0\leq \Delta_{k+1}(\phi(x^k) - \phi(x^{k+1}) + {\nu_k} + \varepsilon_k)\leq (\phi(x^k) + {\nu_k} ) - (\phi(x^{k+1}) + \nu_{k+1}) + \varepsilon_k, 
\end{equation}
for all $k \in \mathbb{N}$.
Since $0 < \Delta_{min} \leq \Delta_{k+1}$, $\phi(x^k) - \phi(x^{k+1}) + {\nu_k} + \varepsilon_k \geq 0$, and $\phi$ satisfies \textbf{(H2)}, we have that 
$$
\Delta_{min}\sum_{k =0}^{N}\left( \phi(x^k) - \phi(x^{k+1}) + {\nu_k} + \varepsilon_k \right) \leq \phi(x^0) + \nu_0 - \bar{\phi}+ \sum_{k =0}^{N}\varepsilon_k.
$$ 
Using now $\sum_{k=0}^{\infty}\varepsilon_k <\infty$, $\Delta_{min}>0$, and 0 $\leq \nu_{k+1} \leq (1-\Delta_{min})( \phi(x^k) - \phi(x^{k+1})+ {\nu_k} + \varepsilon_k)$ gives us $\sum_{k =0}^{\infty}{\nu_k} < \infty$ and thus verifies that the sequence $\left\{ {\nu_k} \right\}_{k\in \mathbb{N}}$ satisfies \textbf{(A2)}.}
\end{remark}

The following four examples illustrate how the above strategies are realized in the settings of Algorithm~2 under appropriate conditions. 

\begin{example}
{\em Take any $\nu_0 > 0$, and define $\Delta_{k+1}$ and $v_{k}$ as follows:
\begin{eqnarray}\label{desv_k}
0<\Delta_{min} \leq \Delta_{k+1} < 1,\;\;\; 0 < {\nu_{k+1}}:= (1-\Delta_{k+1})\left({\sigma}/{2} - \theta + \rho{\lambda}_{k}^2\right)\|d^k\|^2\;\;
\end{eqnarray}
for all $k\in\mathbb{N}$.
Then Proposition \ref{descent1} tells us that $\left({\sigma}/{2} -\theta + \rho{\lambda}_{k}^2\right)\|d^k\|^2 \leq \phi(x^k) - \phi(x^{k+1}) + {\nu_k} + \varepsilon_k$. Therefore, whenever $d^k \neq 0$, we have
$$
0 < \nu_{k+1} \leq (1-\Delta_{k+1})\left(\phi(x^k) - \phi(x^{k+1}) + {\nu_k} + \varepsilon_k  \right),
$$
which ensures that $(\nu_k)_{k\in \mathbb{N}}$ defined in \eqref{desv_k} satisfies \textbf{(A1)}. Having $\Delta_{min} > 0$ and $\sum_{k=0}^{\infty}\varepsilon_k <\infty$, we deduce from Remark~\ref{re:equi} that $(\nu_k)_{k\in \mathbb{N}}$ also satisfies \textbf{(A2)}.}
\end{example}

\begin{example} {\rm Let $(C_k)_{k\in\mathbb{N}}$ be the sequence of ``cost updates" employed in the nonmonotone linesearch from \cite{zhang2004nonmonotone}, which is constructed as follows: take $0\leq \eta_{min}\leq \eta_{max} < 1$, $C_0 > \phi(x^0)$, and $Q_0 = 1$. Then choose $\eta_k \in [\eta_{min}, \eta_{max}]$ and set
\begin{equation*}
Q_{k+1}:=\eta_kQ_{k}+1, \qquad C_{k+1} := ({\eta_k}Q_kC_k + \phi(x^{k+1}))/Q_{k+1}\;\mbox{ for all }\;k\in \mathbb{N}.
\end{equation*}
Define $(\nu_k)_{k\in\mathbb{N}}$ by $\nu_k: = C_k - \phi(x^k)$ and $\Delta_{k+1}:= 1/Q_{k+1}$ for all $k\in\mathbb{N}$. This gives us 
$$
_{k+1} = (1 - \Delta_{k+1})(\phi(x^k) - \phi(x^{k+1}) + \nu_k) \leq (1 - \Delta_{k+1})(\phi(x^k) - \phi(x^{k+1}) + \nu_k + \varepsilon_k),
$$
with $k \in \mathbb{N}$. After some algebraic transformations similar to \cite{ferreira2024boosted}, we get $\Delta_{min} = (1-\eta_{max}) > 0$. Thus $(\nu_{k})_{k\in\mathbb{N}}$ satisfies \textbf{(A1)} with $\Delta_{min}>0$, and hence it satisfies \textbf{(A2)} by Remark \ref{re:equi}.}
\end{example}

\begin{example} {\rm Consider the nonmonotone scheme in \cite{grippo1986nonmonotone} and define $(\nu_k)_{k\in\mathbb{N}}$ as follows: pick an integer $M >0$, set $m_0=0$ with $0 \leq m_k \leq \min\lbrace m_{k-1} + 1, M \rbrace$ for all $k\in\mathbb N$. Setting $\phi(x^{l(k)}): = \max_{0\leq j \leq m_k}\phi(x^{k-j})$
\begin{equation*}\
\nu_k:= \phi(x^{l(k)}) - \phi(x^k)\quad \text{and}\quad 0 = \Delta_{min} \leq \Delta_{k+1} \leq \frac{\phi(x^{l(k)}) - \phi(x^{l(k + 1)})}{\phi(x^{l(k)}) - \phi(x^{k+1})}
\end{equation*}
for all $k \in \mathbb{N}$, we have that these constructions satisfy \textbf{(A1)} with $\Delta_{min}=0$.} 
\end{example}

The last example is taken from \cite[Example~4]{ferreira2024boosted}.

\begin{example} {\rm 
Let $\omega > 0$ be a constant. Then the sequence $(\nu_k)_{k\in \mathbb{N}}$ defined by ${\nu_k}:=  {\omega}\|d^k\|^2/{(k+1)}$, $k \in \mathbb{N}$, satisfies \textbf{(A3)}. Indeed, by $\lim_{k\to\infty}{\frac{\omega}{k+1}} = 0$, for any fixed $\delta > 0$ there exists $k_0\in \mathbb{N}$ with ${\omega}/{(k+1)} \leq \delta$ as $k \geq k_0$, which implies that ${\nu_k} \leq \delta\|d^k\|^2$ for such $k$.
More generally, let $\left\{u_k\right\}_{k\in\mathbb{N}}$ be a sequence of positive numbers such that $\lim_{k\to\infty}u_k =\infty$, then ${\nu_k}:= \frac{\omega}{u_k}\|d^k\|^2$ satisfies \textbf{(A3)}. To see this, fix $\delta>0$ and find $k_0 \in \mathbb{N}$ with $ 0 < \frac{\omega}{\delta} \leq {u^k}$ whenever $k\geq k_0$, i.e., ${\omega}/{u^k}\leq \delta$, which implies that ${\nu_k} = {\omega}\|d^k\|^2/{u_k} \leq \delta\|d^k\|^2$ 
for all $k \geq k_0$.}
\end{example}

\section{Convergence Analysis}\label{sec6}

The goal of this section is to present convergence results of InmBDCA in Algorithm~2. We begin with providing a sufficient condition to ensure that each {\em accumulation point} of the sequence $(x^k)_{k\in \mathbb{N}}$ is a {\em critical point} of the DC program \eqref{dcproblem}.

\begin{proposition}\label{subseqconv}
If $\lim_{k\to\infty}\|d^k\|=0$ and $\lim_{k\to\infty}\varepsilon_k=0$ in Algorithm~{2}, then each accumulation point of iterative sequence $(x^k)_{k\in \mathbb{N}}$ is critical.
\end{proposition}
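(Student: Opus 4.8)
The plan is to fix an arbitrary accumulation point $\bar{x}$ of the sequence $(x^k)_{k\in\mathbb{N}}$ and to exhibit a single vector lying in both $\partial g(\bar{x})$ and $\partial h(\bar{x})$, which is exactly the criticality required by Definition~\ref{crit}. First I would select a subsequence $(x^{k_j})_{j\in\mathbb{N}}$ with $x^{k_j}\to\bar{x}$. Since $d^k=y^k-x^k$ and $\|d^k\|\to 0$ by hypothesis, the subproblem iterates satisfy $y^{k_j}=x^{k_j}+d^{k_j}\to\bar{x}$ as well, so both the $x$- and $y$-sequences converge to the same limit along this subsequence.

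The heart of the argument is to exploit the relative-error coupling \eqref{3inexactnmBDCA} together with the closedness properties of the subdifferentials recorded in Section~\ref{sec2}. Recall that $\xi^k\in\partial g(y^k)$ by \eqref{2inexactnmBDCA}. Applying Proposition~\ref{boundednessofy^k} to the convex function $g$ along $y^{k_j}\to\bar{x}$, the sequence $(\xi^{k_j})_{j\in\mathbb{N}}$ is bounded, so after passing to a further subsequence (not relabeled) I may assume $\xi^{k_j}\to\bar{\xi}$ with $\bar{\xi}\in\partial g(\bar{x})$. It then remains to show $\bar{\xi}\in\partial h(\bar{x})$.

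For this I would use \eqref{3inexactnmBDCA}, namely $\|w^{k_j}-\xi^{k_j}\|\le\theta\|d^{k_j}\|\to 0$, which forces $w^{k_j}\to\bar{\xi}$ as well. Now $w^{k_j}\in\partial_{\varepsilon_{k_j}}h(x^{k_j})$ with $x^{k_j}\to\bar{x}$, $\varepsilon_{k_j}\to 0$, and $w^{k_j}\to\bar{\xi}$; invoking the limiting-inclusion property of the $\varepsilon$-subdifferential established in Section~\ref{sec2} (the closedness proposition for $\partial_{\varepsilon}$), I conclude $\bar{\xi}\in\partial_0 h(\bar{x})$. Comparing \eqref{e-sub} at $\varepsilon=0$ with \eqref{sub} gives $\partial_0 h(\bar{x})=\partial h(\bar{x})$, whence $\bar{\xi}\in\partial g(\bar{x})\cap\partial h(\bar{x})$ and $\bar{x}$ is critical.

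Because the proof is essentially a limiting/closedness computation, there is no single hard conceptual obstacle; the main points requiring care are the bookkeeping of the nested subsequence extractions and, above all, the recognition that the relative-error bound \eqref{3inexactnmBDCA} is precisely what pins $w^{k_j}$ and $\xi^{k_j}$ to a common limit. Without that bound, the $g$-subgradient taken at $y^k$ and the $\varepsilon_k$-subgradient of $h$ taken at $x^k$ would not be forced to coincide in the limit, and the intersection $\partial g(\bar{x})\cap\partial h(\bar{x})$ could not be certified. One should also verify that $\varepsilon_{k_j}\to 0$ is inherited from the full sequence and keep track of the fact that $\|d^k\|\to 0$ is what makes $x^{k_j}$ and $y^{k_j}$ share the limit $\bar{x}$, which is indispensable for applying the $g$-closedness at $\bar{x}$.
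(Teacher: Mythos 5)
Your proposal is correct and follows essentially the same route as the paper's proof: extract a subsequence, use $\|d^k\|\to 0$ to force $y^{k_j}\to\bar x$, apply Proposition~\ref{boundednessofy^k} to get $\bar\xi\in\partial g(\bar x)$, use \eqref{3inexactnmBDCA} to pin $w^{k_j}$ to the same limit, and pass to the limit in the $\varepsilon$-subgradient inequality to get $\bar\xi\in\partial h(\bar x)$. The only cosmetic difference is that you cite the closedness proposition for $\partial_\varepsilon$ from Section~\ref{sec2}, whereas the paper reproves that limiting step directly from the defining inequality.
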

\begin{proof}
Let ${\bar x}$ be a accumulation point of $(x^k)_{k\in \mathbb{N}}$, and let $(x^{k_j})_{j\in \mathbb{N}}$  be a subsequence of $(x^{k})_{k\in \mathbb{N}}$ such that $\lim_{j\to\infty}x^{k_j} = {\bar x}$. Remembering that $d^k = y^k - x^k$ and  $\lim_{k\to\infty}\|d^k\|=0$, and taking into account that  $\|y^{k_j} - {\bar x}\| \leq \|y^{k_j} - x^{k_j}\| + \|x^{k_j} - {\bar x}\|$, we have that $\lim_{j\to\infty}{\|y^{k_j} - {\bar x}\|}=0$, i.e., $\lim_{j\to\infty}y^{k_j} = {\bar x}$. Since $\xi^{k_j} \in \partial{g(y^{k_j})}$ for all $j \in \mathbb{N}$, Proposition~\ref{boundednessofy^k} implies that  the sequence $(\xi^{k_j})_{j\in \mathbb{N}}$ is bounded. Without loss of generality, suppose that this sequence is convergent and set ${\bar \xi}: = \lim_{j\to\infty}\xi^{k_j}$. It follows from \eqref{3inexactnmBDCA}  that $\|\xi^{k_j} - w^{k_j}\|\leq \theta \|y^{k_j}-x^{k_j}\|$, which  implies that  $\lim_{j\to\infty}{\|\xi^{k_j} - w^{k_j}\|}=0$.  Using $\|w^{k_j} - {\bar \xi}\|\leq \|w^{k_j} - \xi^{k_j}\| + \|\xi^{k_j} - {\bar \xi}\|$, we have that $\lim_{j\to\infty}{\|w^{k_j} - {\bar \xi}\|}=0$, i.e., $\lim_{j\to\infty}w^{k_j} = {\bar \xi}$. Pick further an arbitrary $y \in \mathbb{R}^n$ and deduce from $w^{k_j} \in \partial_{\varepsilon_{k_j}}{h(x^{k_j})}$ that $h(y) \geq h(x^{k_j}) + \langle w^{k_j}, y - x^{k_j} \rangle - \varepsilon_{k_j}$. Passing to the limit as $j \to\infty$ and employing the continuity of $h$ give us $h(y) \geq h({\bar x}) + \langle {\bar \xi}, y - {\bar x} \rangle$.  Since $y$ was chosen arbitrarily, this tells us  that ${\bar \xi} \in \partial{h({\bar x})}$. On the other hand, it follows from Proposition~\ref{boundednessofy^k} by $\lim_{j\to\infty}y^{k_j} = {\bar x}$,  ${\bar \xi} = \lim_{j\to\infty}\xi^{k_j}$, and $\xi^{k_j} \in \partial{g(y^{k_j})}$ that ${\bar \xi} \in \partial{g({\bar x})}$, which verifies that ${\bar x}$ is a critical point.
\end{proof}

The subsequent results, employing Proposition~\ref{subseqconv}, establish efficient conditions on parameters of the DC program \eqref{dcproblem} and Algorithm~2 ensuring that cluster points of the iterates $(x^{k_j})_{j\in \mathbb{N}}$  are critical points of \eqref{dcproblem} under each choice of the strategies discussed in Section~5.

\begin{theorem}\label{teoS2}
Suppose that $({\nu_k})_{k\in \mathbb{N}}$ satisfies {\rm\textbf{(A2)}}, and that $(\varepsilon_k)_{k\in \mathbb{N}}$ satisfies $\sum_{k=0}^{\infty}{\varepsilon_k} <\infty$. Then each accumulation point of  $(x_k)_{k\in \mathbb{N}}$ is critical for \eqref{dcproblem}.
\end{theorem}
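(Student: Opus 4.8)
The plan is to reduce the statement entirely to Proposition~\ref{subseqconv}, which already delivers criticality of every accumulation point once we know that $\lim_{k\to\infty}\|d^k\|=0$ and $\lim_{k\to\infty}\varepsilon_k=0$. The second limit is immediate: since $\sum_{k=0}^{\infty}\varepsilon_k<\infty$, the general term of a convergent series must vanish, so $\varepsilon_k\to 0$. Thus the entire burden of the argument is to verify that $\|d^k\|\to 0$.

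To establish this, I would sum the descent estimate \eqref{ineqdescent1} from Proposition~\ref{descent1}. Writing $c:=\sigma/2-\theta$, which is strictly positive because $\theta\in[0,\sigma/2)$ by Step~1 of Algorithm~2, the estimate reads $c\,\|d^k\|^2\le \phi(x^k)-\phi(x^{k+1})+\nu_k+\varepsilon_k$. Summing over $k=0,\dots,N$ telescopes the $\phi$-differences, giving
\begin{equation*}
c\sum_{k=0}^{N}\|d^k\|^2\le \phi(x^0)-\phi(x^{N+1})+\sum_{k=0}^{N}\nu_k+\sum_{k=0}^{N}\varepsilon_k.
\end{equation*}
Next I would invoke the standing assumption~\hyperlink{H2}{(H2)}: since $\phi$ is bounded from below by $\bar\phi$, we have $-\phi(x^{N+1})\le-\bar\phi$, and therefore the right-hand side is dominated by $\phi(x^0)-\bar\phi+\sum_{k=0}^{\infty}\nu_k+\sum_{k=0}^{\infty}\varepsilon_k$, a finite quantity independent of $N$ thanks to hypothesis~\textbf{(A2)} together with $\sum_{k=0}^{\infty}\varepsilon_k<\infty$. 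Letting $N\to\infty$ then yields $\sum_{k=0}^{\infty}\|d^k\|^2<\infty$, whence $\|d^k\|^2\to 0$ and consequently $\|d^k\|\to 0$.

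With both hypotheses of Proposition~\ref{subseqconv} now verified, the desired conclusion that every accumulation point of $(x^k)_{k\in\mathbb N}$ is critical for \eqref{dcproblem} follows at once. I expect essentially no obstacle here; the only points meriting attention are confirming $c>0$ from the restriction $\theta\in[0,\sigma/2)$, and recognizing that the telescoping of the $\phi$-differences combined with boundedness from below is exactly the mechanism converting the finite sums $\sum_k\nu_k$ and $\sum_k\varepsilon_k$ into summability of $\|d^k\|^2$. No compactness or additional regularity beyond the standing assumptions is required.
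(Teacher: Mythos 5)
Your proposal is correct and follows essentially the same route as the paper: sum the descent estimate from Proposition~\ref{descent1}, telescope, bound below via \hyperlink{H2}{(H2)}, use \textbf{(A2)} and the summability of $(\varepsilon_k)$ to get $\sum_k\|d^k\|^2<\infty$, and conclude via Proposition~\ref{subseqconv}. Your explicit observation that $\varepsilon_k\to 0$ (needed as a hypothesis of Proposition~\ref{subseqconv}) is a small point the paper leaves implicit, but the argument is otherwise identical.
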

\begin{proof}
Proposition~\ref{descent1} tells us that $\phi(x^{k+1}) \leq \phi(x^k) - \left({\sigma}/{2} -\theta + \rho{\lambda}_{k}^2\right)\|d^k\|^2  + {\nu_k} + \varepsilon_k$ for all $k \in \mathbb{N}$. It follows from $\theta \in \left[0, {\sigma}/{2}\right)$ that ${\sigma}/{2} -\theta > 0$. Therefore,
$$
\left(\displaystyle\frac{\sigma}{2} -\theta\right)\|d^k\|^2 \leq  \left( \displaystyle\frac{\sigma}{2} -\theta + \rho{\lambda_k}^2 \right)\|d^k\|^2\leq \phi(x^k) - \phi(x^{k+1}) + v^k + \varepsilon_k.
$$	
Taking the $N$-th partial sum in the above inequality, we get
$$
\sum_{k=0}^{N}{\left( \displaystyle\frac{\sigma}{2} -\theta \right)\|d^k\|^k} \leq \phi(x^0) - \Bar\phi + \sum_{k=0}^{N}{{\nu_k}} + \sum_{k=0}^{N}{\varepsilon_k},
$$
where $\bar{\phi}:=\inf_{x\in\mathbb{R}^n}{\phi(x)}> -\infty$. Passing to the limit in the latter inequality as $N\to\infty$ and employing \textbf{(A2)} together with $\sum_{k=0}^{\infty}{\varepsilon_k} <\infty$ yield
\begin{equation*}
\sum_{k=0}^{\infty}\left(\displaystyle\frac{\sigma}{2} -\theta\right)\|d^k\|^2 \leq \phi(x^0) - \Bar\phi + \sum_{k=0}^{\infty}{{\nu_k}} + \sum_{k=0}^{\infty}{\varepsilon_k} <\infty,
\end{equation*}
which proves that $\sum_{k=0}^{\infty}\|d^k\|^2 <\infty$. This readily ensures that $\displaystyle\lim_{k\to\infty}\|d^k\|^2=0$, and this the claimed result follows from Proposition~\ref{subseqconv}.
\end{proof}

\begin{theorem}\label{teoS3}
Suppose that $({\nu_k})_{k\in \mathbb{N}}$ satisfies {\rm\textbf{(A3)}}, and that $(\varepsilon_k)_{k\in \mathbb{N}}$ satisfies $\sum_{k=0}^{\infty}{\varepsilon_k} <\infty$. Then each accumulation point of  $(x_k)_{k\in \mathbb{N}}$ in Algorithm~{\rm 2} is critical for \eqref{dcproblem}.
\end{theorem}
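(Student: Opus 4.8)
The plan is to mirror the proof of Theorem~\ref{teoS2}, reducing everything to the two hypotheses of Proposition~\ref{subseqconv}, namely $\lim_{k\to\infty}\|d^k\|=0$ and $\lim_{k\to\infty}\varepsilon_k=0$. The second of these is immediate, since $\sum_{k=0}^\infty\varepsilon_k<\infty$ forces $\varepsilon_k\to 0$. The entire content of the theorem therefore lies in establishing $\|d^k\|\to 0$ under the weaker nonmonotonicity strategy \textbf{(A3)}, which (unlike \textbf{(A2)}) does not give summability of $(\nu_k)_{k\in\mathbb N}$ directly.

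The key device is to absorb the $\nu_k$ term into the descent term rather than summing it separately. Starting from the second estimate of Proposition~\ref{descent1}, namely $\left(\sigma/2-\theta\right)\|d^k\|^2\le \phi(x^k)-\phi(x^{k+1})+\nu_k+\varepsilon_k$, I would first fix a constant $\delta$ with $0<\delta<\sigma/2-\theta$, which is possible because $\theta\in[0,\sigma/2)$ guarantees $\sigma/2-\theta>0$. By strategy \textbf{(A3)} applied to this $\delta$, there exists $k_0\in\mathbb N$ such that $\nu_k\le\delta\|d^k\|^2$ for all $k\ge k_0$. Substituting this bound and rearranging yields, for every $k\ge k_0$,
$$
\left(\frac{\sigma}{2}-\theta-\delta\right)\|d^k\|^2\le \phi(x^k)-\phi(x^{k+1})+\varepsilon_k,
$$
where now the coefficient on the left is strictly positive.

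From here the argument telescopes exactly as in Theorem~\ref{teoS2}: summing the displayed inequality from $k=k_0$ to $N$ collapses the difference of function values into $\phi(x^{k_0})-\phi(x^{N+1})\le\phi(x^{k_0})-\bar\phi$ by \hyperlink{H2}{(H2)}, while the $\varepsilon_k$ terms are controlled by $\sum_{k=0}^\infty\varepsilon_k<\infty$. Letting $N\to\infty$ gives $\left(\sigma/2-\theta-\delta\right)\sum_{k=k_0}^\infty\|d^k\|^2\le \phi(x^{k_0})-\bar\phi+\sum_{k=0}^\infty\varepsilon_k<\infty$, and dividing by the positive coefficient shows $\sum_{k=k_0}^\infty\|d^k\|^2<\infty$, hence $\sum_{k=0}^\infty\|d^k\|^2<\infty$ and therefore $\lim_{k\to\infty}\|d^k\|=0$. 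Combined with $\varepsilon_k\to 0$, Proposition~\ref{subseqconv} then delivers criticality of every accumulation point.

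The only genuinely nonroutine step is the choice of $\delta$ strictly inside $(0,\sigma/2-\theta)$: this is what allows \textbf{(A3)} to fold $\nu_k$ into the descent term while keeping a strictly positive multiple of $\|d^k\|^2$ on the left, and it is exactly the reason the threshold $\theta<\sigma/2$ is imposed. I expect the mild subtlety to be bookkeeping the tail index $k_0$ correctly, since \textbf{(A3)} only guarantees $\nu_k\le\delta\|d^k\|^2$ eventually; summing from $k_0$ rather than $0$ handles this cleanly and does not affect the conclusion that $\|d^k\|\to 0$.
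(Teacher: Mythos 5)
Your proposal is correct and follows essentially the same route as the paper: the paper fixes the specific choice $\delta=\tfrac{1}{2}\left(\tfrac{\sigma}{2}-\theta\right)$ to absorb $\nu_k$ into the descent term via \textbf{(A3)}, telescopes, and invokes Proposition~\ref{subseqconv}, exactly as you do with a generic $\delta\in\left(0,\tfrac{\sigma}{2}-\theta\right)$. Your bookkeeping of the tail index $k_0$ (summing from $k_0$ rather than $0$) is in fact slightly more careful than the paper's own display.
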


\begin{proof}
Setting $\delta:= ({1}/{2})\left({\sigma}/{2} - \theta \right) > 0$ and using ${\nu_k} \leq \delta\|d^k\|^2 = 2\delta\|d^k\|^2 - \delta\|d^k\|^2$ for all $k \geq k_0$, 
we get $\delta\|d^k\|^2 \leq 2\delta\|d^k\|^2 -	{\nu_k}$. Therefore, Proposition \ref{descent1} tells us that
\begin{equation}\label{EQteoS3}
\delta\|d^k\|^2 \leq \left({\sigma}/{2} - \theta\right)\|d^k\|^2 - {\nu_k}\leq \phi(x^k) - \phi(x^{k+1}) + \varepsilon_k\;\mbox{ whenever }\;k\geq k_0.
\end{equation}
Taking the partial sum in the expression above with the notation $\displaystyle\bar{\phi}= \inf_{x\in\mathbb{R}^n}\phi(x)$ yields
$$
\delta\sum_{k=0}^{N}\|d^k\|^2 \leq \phi(x^0) - \phi(x^{N+1}) + \sum_{k=0}^{N}\varepsilon_k \leq \phi(x^0) - \bar{\phi}+ \sum_{k=0}^{N}\varepsilon_k.
$$
Passing now to the limit as $N\to\infty$ in the latter inequality  and remembering that $\sum_{k=0}^{\infty}{\varepsilon_k} <\infty$ ensures that $\sum_{k=0}^{\infty}\|d^k\|^2 <\infty$, and hence $\displaystyle\lim_{k\to\infty}\|d^k\|^2=0$. In this way, we arrive at the desired conclusion by using Proposition~\ref{subseqconv}.
\end{proof}

\begin{remark} {\rm Observe that if there exists $k_0 \in \mathbb{N}$ such that $\varepsilon_k \leq \delta\|d^k\|^2$ for all $k \geq k_0$ in Theorem \ref{teoS3}, then the estimate in \eqref{EQteoS3} implies that
\begin{equation*}
\phi(x^{k+1}) \leq \phi(x^k)\;\mbox{ whenever }\;k\geq k_0.
\end{equation*}
In this case, it is sufficient supposing that $\displaystyle\lim_{k\to\infty}\varepsilon_k = 0$ to guarantee that $\displaystyle\lim_{k\to\infty}\|d^k\|=0$.}
\end{remark}

\begin{theorem}
If the sequence $(\nu_k)_{k\in \mathbb{N}}$ is chosen according to strategy {\rm\textbf{(A1)}} with $\Delta_{min}>0$ and $\sum_{k=0}^{\infty}\varepsilon_k <\infty$, then every accumulation point of $(x_k)_{k\in \mathbb{N}}$ is critical for \eqref{dcproblem}.
\end{theorem}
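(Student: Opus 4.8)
The plan is to recognize this statement as essentially a corollary of Theorem~\ref{teoS2}: I would reduce it to the summability condition \textbf{(A2)} by showing that the present hypotheses force $\sum_{k=0}^{\infty}\nu_k<\infty$. Once that reduction is in place, the conclusion is immediate, since with both $\sum_{k=0}^\infty\nu_k<\infty$ and $\sum_{k=0}^\infty\varepsilon_k<\infty$ in force, Theorem~\ref{teoS2} directly guarantees that every accumulation point of $(x^k)_{k\in\mathbb N}$ is critical for \eqref{dcproblem}. Thus the whole substance of the proof lies in the implication \textbf{(A1)}$\,\Rightarrow\,$\textbf{(A2)} under the standing assumptions $\Delta_{min}>0$ and $\sum_k\varepsilon_k<\infty$, which is exactly the computation already recorded in Remark~\ref{re:equi}.

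First I would invoke Proposition~\ref{descent1}: since $\theta\in[0,\sigma/2)$, the quantity $\phi(x^k)-\phi(x^{k+1})+\nu_k+\varepsilon_k$ is nonnegative for every $k$, so the upper bound in \eqref{S1} is meaningful and a nonnegative $\nu_{k+1}$ can indeed be selected. Next I would rewrite \eqref{S1} in the telescoping-friendly form \eqref{desDelta}, namely
$$
0\le \Delta_{k+1}\big(\phi(x^k)-\phi(x^{k+1})+\nu_k+\varepsilon_k\big)\le \big(\phi(x^k)+\nu_k\big)-\big(\phi(x^{k+1})+\nu_{k+1}\big)+\varepsilon_k,
$$
and then, using $0<\Delta_{min}\le\Delta_{k+1}$, sum this from $k=0$ to $N$. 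The middle column telescopes, leaving
$$
\Delta_{min}\sum_{k=0}^N\big(\phi(x^k)-\phi(x^{k+1})+\nu_k+\varepsilon_k\big)\le \phi(x^0)+\nu_0-\bar\phi+\sum_{k=0}^N\varepsilon_k,
$$
where boundedness of the right-hand side relies on \hyperlink{H2}{(H2)} (so that $\phi(x^{N+1})+\nu_{N+1}\ge\bar\phi$) together with the assumed summability $\sum_{k=0}^\infty\varepsilon_k<\infty$.

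Letting $N\to\infty$ then shows that the series $\sum_k(\phi(x^k)-\phi(x^{k+1})+\nu_k+\varepsilon_k)$ converges; since $0\le\nu_{k+1}\le(1-\Delta_{min})(\phi(x^k)-\phi(x^{k+1})+\nu_k+\varepsilon_k)$, a comparison test yields $\sum_{k=0}^\infty\nu_k<\infty$, which is precisely \textbf{(A2)}, and Theorem~\ref{teoS2} then closes the argument. I do not expect any genuine obstacle here: the only point demanding care is confirming that the telescoped bound stays finite, and this hinges solely on \hyperlink{H2}{(H2)} and on $\sum_k\varepsilon_k<\infty$. In short, the theorem is best presented as a corollary of Theorem~\ref{teoS2}, packaged with the reduction carried out in Remark~\ref{re:equi}, rather than proved from scratch.
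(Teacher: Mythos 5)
Your proposal is correct and follows exactly the paper's route: the paper likewise cites Remark~\ref{re:equi} to deduce that \textbf{(A1)} with $\Delta_{min}>0$ and $\sum_{k=0}^{\infty}\varepsilon_k<\infty$ implies \textbf{(A2)}, and then invokes Theorem~\ref{teoS2}. The only difference is that you spell out the telescoping computation from the remark in full, which the paper leaves as a citation.
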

\begin{proof}
Since $\Delta_{min}>0$ and $\sum_{k=0}^{\infty}\varepsilon_k <\infty$, Remark~\ref{re:equi} implies that $(\nu_k)_{k\in \mathbb{N}}$ satisfies \textbf{(A2)}. Then Theorem~\ref{teoS2} tells us that every accumulation point of $(x_k)_{k\in \mathbb{N}}$ is a critical point for \eqref{dcproblem}.
\end{proof}

The last of this section not only establishes the criticality of accumulation points but also verifies the nonincreasing property of the value sequence.

\begin{theorem}
Let $\varepsilon_k = 0$ for all $k \in \mathbb{N}$. If the sequence $(\nu_k)_{k\in\mathbb{N}}$ is chosen according to strategy {\rm\textbf{(A1)}}, the following assertions hold:
\begin{enumerate}
\item [\bf(i)] The sequence $(\phi(x^k) + \nu _k)_{k\in\mathbb{N}}$ is nonincreasing and convergent.
	
\item[\bf(ii)] If $\displaystyle{\lim_{k\to\infty}\nu _k} = 0$, then every accumulation point of $(x^k)_{k\in\mathbb{N}}$, if any, is a critical point of \eqref{dcproblem}.
\end{enumerate}
\end{theorem}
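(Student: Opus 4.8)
The plan is to work throughout with the shifted value sequence $a_k := \phi(x^k) + \nu_k$ and to extract everything from the descent estimate of Proposition~\ref{descent1} specialized to $\varepsilon_k = 0$, namely $\left(\sigma/2 - \theta\right)\|d^k\|^2 \leq \phi(x^k) - \phi(x^{k+1}) + \nu_k$ for all $k \in \mathbb{N}$. The whole argument reduces to reading the monotonicity and the vanishing of $\|d^k\|$ off this inequality.

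For assertion (i), I would first observe that since $\theta \in [0, \sigma/2)$ the left-hand side of the descent estimate is nonnegative, so $\phi(x^k) - \phi(x^{k+1}) + \nu_k \geq 0$ for every $k$. Strategy \textbf{(A1)} with $\varepsilon_k = 0$ reads $\nu_{k+1} \leq (1 - \Delta_{k+1})\left(\phi(x^k) - \phi(x^{k+1}) + \nu_k\right)$, and because $\Delta_{k+1} \geq \Delta_{min} \geq 0$ gives $1 - \Delta_{k+1} \leq 1$, multiplying a nonnegative quantity by a factor at most $1$ yields $\nu_{k+1} \leq \phi(x^k) - \phi(x^{k+1}) + \nu_k$. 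Rearranging this is precisely $a_{k+1} \leq a_k$, so $(a_k)_{k\in\mathbb{N}}$ is nonincreasing. For convergence I would bound $(a_k)$ from below: by \hyperlink{H2}{(H2)} we have $\phi(x^k) \geq \bar\phi > -\infty$, and $\nu_k \geq 0$, hence $a_k \geq \bar\phi$. A nonincreasing sequence bounded below converges, which settles (i).

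For assertion (ii), the goal is to verify the hypotheses of Proposition~\ref{subseqconv}, namely $\lim_{k\to\infty}\|d^k\| = 0$ and $\lim_{k\to\infty}\varepsilon_k = 0$; the latter is trivial since $\varepsilon_k = 0$ for all $k$. The key step is to rewrite the right-hand side of the descent estimate in terms of $(a_k)$: using $\phi(x^k) - \phi(x^{k+1}) + \nu_k = \left(\phi(x^k) + \nu_k\right) - \left(\phi(x^{k+1}) + \nu_{k+1}\right) + \nu_{k+1} = a_k - a_{k+1} + \nu_{k+1}$, I obtain $\left(\sigma/2 - \theta\right)\|d^k\|^2 \leq a_k - a_{k+1} + \nu_{k+1}$. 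Since $(a_k)$ converges by part (i), the consecutive differences $a_k - a_{k+1}$ tend to $0$, and by the standing hypothesis $\nu_{k+1} \to 0$, so the right-hand side tends to $0$; as $\sigma/2 - \theta > 0$ this forces $\|d^k\| \to 0$. Applying Proposition~\ref{subseqconv} then yields that every accumulation point of $(x^k)_{k\in\mathbb{N}}$ is critical for \eqref{dcproblem}.

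These steps are essentially routine once the descent estimate is in hand, and I do not anticipate a genuine obstacle. The only point requiring care is the algebraic rewriting in part (ii) that converts the per-step decrease plus the current term $\nu_k$ into a telescoping difference of the shifted sequence plus $\nu_{k+1}$; this is what allows convergence of $(a_k)$ to do the work. The one structural fact to keep visible is that the nonnegativity of $\phi(x^k) - \phi(x^{k+1}) + \nu_k$, which underlies both the monotonicity in (i) and the sign handling in (ii), is exactly what the condition $\theta < \sigma/2$ in Proposition~\ref{descent1} guarantees.
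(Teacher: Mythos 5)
Your proposal is correct and follows essentially the same route as the paper: both parts rest on the descent estimate of Proposition~\ref{descent1} with $\varepsilon_k=0$, deriving monotonicity of $\phi(x^k)+\nu_k$ from the \textbf{(A1)} bound exactly as in \eqref{desDelta}, and then forcing $\|d^k\|\to 0$ from the convergence of that shifted sequence together with $\nu_k\to 0$ before invoking Proposition~\ref{subseqconv}. Your telescoping rewrite $\phi(x^k)-\phi(x^{k+1})+\nu_k=a_k-a_{k+1}+\nu_{k+1}$ is only a cosmetic variant of the paper's observation that $\phi(x^k)$ itself converges.
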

\begin{proof}

To justify (i), observe that \eqref{desDelta} in Remark~\ref{re:equi} reduces for $\varepsilon_k=0$ to
\begin{equation*}
0 \leq \Delta_{min}\left( \phi(x^k) - \phi(x^{k+1}) + {\nu_k} \right) \leq \left(\phi(x^k) + {\nu_k} \right) - \left(\phi(x^{k+1}) + \nu_{k+1}\right),
\end{equation*}
which implies that $\phi(x^k) + {\nu_k} \leq \phi(x^{k+1}) + \nu_{k+1}$ for all $k\in\mathbb{N}$, i.e., $(\phi(x^k) + \nu _k)_{k\in\mathbb{N}}$  is nonincreasing. It follows from \textbf{(H2)} that either $(\phi(x^k))_{k\in \mathbb{N}}$ or $(\nu _k)_{k\in\mathbb{N}}$ are lower bounded. Thus the sequence $(\phi(x^k) + \nu _k)_{k\in\mathbb{N}}$ is bounded and  nonincreasing being convergent therefore. 

Finally, we verify assertion (ii). It follows from (i) and $\lim_{k\to+\infty}{\nu_k} = 0$ that the sequence $(\phi(x^k))_{k\in \mathbb{N}}$ is convergent. On the other hand, inequality \eqref{ineqdescent1} with $\varepsilon_k = 0$ in Proposition~\ref{descent1} yields
\begin{equation*}
\left( \displaystyle\frac{\sigma}{2} - \theta \right)\|d^k\|^2 \leq \phi(x^k) - \phi(x^{k+1}) + {\nu_k}\;\mbox{ for all }\;k\in\mathbb N.
\end{equation*}
Passing to the limit as $N\to\infty$ in the last inequality leads us to $\displaystyle\lim_{k\to\infty}\|d^k\|^2 =0$ and thus completes the proof by employing Proposition~\ref{subseqconv}. 
\end{proof}

\section{Iteration-Complexity Analysis}\label{sec7}

This section presents some results of {\em iteration-complexity bounds} for ${(x^k)}_{k\in\mathbb{N}}$ generated by Algorithm~2. We consider the cases where the sequence ${(\nu _{k})}_{k\in \mathbb{N}}$ is choosing according to \textbf{(A2)} and \textbf{(A3)}. The theorems below are based on Proposition~\ref{descent1}, which particularly implies that
\begin{equation}\label{complexity}
\left( \displaystyle\frac{\sigma}{2} -\theta \right)\|d^k\|^2 \leq \phi(x^k) - \phi(x^{k+1})   + {\nu_k} + \varepsilon_k\;\mbox{ for all }\;k\in\mathbb N.
\end{equation}

\begin{theorem}
Suppose that the sequence $(\nu _{k})_{k\in \mathbb{N}}$ is chosen according to strategy {\rm\textbf{(A2)}} and that $\sum_{k=0}^{\infty}{\varepsilon_k} <\infty$. Then for each $N\in\mathbb{N}$ we have the estimate
\begin{equation*}
\min\big\{\| d^{k}\|\;\big|\;k=0,1,\ldots,N-1\big\}\leq {\frac{ \sqrt{\phi(x^{0})-\Bar\phi+\sum _{k=0}^{\infty}\nu_{k}+ \sum _{k=0}^{\infty}{\varepsilon _{k}}}} { \sqrt{ \frac{\sigma}{2} -\theta }}}\frac{1}{ \sqrt{N}}.
\end{equation*}
Consequently, for a given accuracy $\varepsilon>0$, if $$N\geq \left({\phi(x^{0})- \Bar\phi+\sum _{k=0}^{\infty}\nu _{k} +\sum _{k=0}^{\infty}\varepsilon _{k} }\right)/\left[\Big({\frac{\sigma}{2} - \theta}\Big) \varepsilon^{2}\right],$$ then  $\min \{\| d^{k}\|\;\big|\;k=0,1,\ldots,N-1\}\leq \varepsilon$.
\end{theorem}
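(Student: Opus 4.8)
The plan is to derive the bound directly from the per-iteration descent estimate \eqref{complexity}, converting a bound on a \emph{sum} of $\|d^k\|^2$ into a bound on the \emph{minimum} via the elementary observation that the minimum never exceeds the average. First I would fix $N\in\mathbb{N}$ and sum \eqref{complexity} over $k=0,1,\ldots,N-1$, obtaining
\begin{equation*}
\left(\frac{\sigma}{2}-\theta\right)\sum_{k=0}^{N-1}\|d^k\|^2\le \sum_{k=0}^{N-1}\big(\phi(x^k)-\phi(x^{k+1})\big)+\sum_{k=0}^{N-1}\nu_k+\sum_{k=0}^{N-1}\varepsilon_k.
\end{equation*}
The first sum on the right telescopes to $\phi(x^0)-\phi(x^N)$, and assumption \hyperlink{H2}{(H2)} gives $\phi(x^N)\ge\bar\phi$, so $\phi(x^0)-\phi(x^N)\le\phi(x^0)-\bar\phi$. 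Since $\nu_k\ge 0$ and $\varepsilon_k\ge 0$, the two finite partial sums are dominated by their infinite counterparts $\sum_{k=0}^{\infty}\nu_k$ and $\sum_{k=0}^{\infty}\varepsilon_k$, both of which are finite by \textbf{(A2)} and the standing hypothesis $\sum_{k=0}^{\infty}\varepsilon_k<\infty$. This yields the uniform-in-$N$ estimate
\begin{equation*}
\left(\frac{\sigma}{2}-\theta\right)\sum_{k=0}^{N-1}\|d^k\|^2\le \phi(x^0)-\bar\phi+\sum_{k=0}^{\infty}\nu_k+\sum_{k=0}^{\infty}\varepsilon_k.
\end{equation*}

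Next I would invoke the key min-versus-sum inequality
\begin{equation*}
N\cdot\min\big\{\|d^k\|^2\;\big|\;k=0,\ldots,N-1\big\}\le\sum_{k=0}^{N-1}\|d^k\|^2,
\end{equation*}
which holds because each term in the sum is at least the minimum. Combining the two displays and recalling $\tfrac{\sigma}{2}-\theta>0$ (guaranteed by $\theta\in[0,\sigma/2)$), I divide through by $N(\tfrac{\sigma}{2}-\theta)$ and take square roots. Using that $\min\{\|d^k\|\}=\sqrt{\min\{\|d^k\|^2\}}$ since $\|\cdot\|\ge 0$, this produces exactly the claimed bound with the factor $1/\sqrt{N}$ and the asserted numerator and denominator.

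For the concluding implication, I would simply substitute the stated lower bound on $N$ into the rate just obtained. Writing $C:=\big(\phi(x^0)-\bar\phi+\sum_{k=0}^\infty\nu_k+\sum_{k=0}^\infty\varepsilon_k\big)/(\tfrac{\sigma}{2}-\theta)$, the main estimate reads $\min\{\|d^k\|\}\le\sqrt{C}/\sqrt{N}$; the hypothesis $N\ge C/\varepsilon^2$ is equivalent to $\sqrt{C}/\sqrt{N}\le\varepsilon$, giving $\min\{\|d^k\|\}\le\varepsilon$ at once. Honestly, there is no genuine analytical obstacle here: the argument is a standard telescoping-and-averaging complexity estimate, and the only points demanding care are ensuring the two infinite series are finite (so that extending the finite partial sums is legitimate) and correctly applying the min-to-average bound, which is the single step that upgrades a summability statement into a concrete $O(1/\sqrt{N})$ rate on the best iterate.
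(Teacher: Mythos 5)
Your proposal is correct and follows essentially the same route as the paper's own proof: summing the descent estimate \eqref{complexity}, telescoping, bounding $\phi(x^N)$ below by $\bar\phi$ via \textbf{(H2)}, dominating the partial sums of $\nu_k$ and $\varepsilon_k$ by their (finite) infinite counterparts, and then applying the min-versus-average inequality before taking square roots. The concluding implication is also handled exactly as in the paper, by substituting the stated lower bound on $N$ into the $O(1/\sqrt{N})$ rate.
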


\begin{proof}
By assumption \textbf{(H2)}, $\Bar \phi=\inf_{x\in\mathbb{R}^n}\phi(x) \leq \phi(x^k)$, $k\in\mathbb{N}$, it follows from \eqref{complexity} that
\begin{align*}
    \sum _{k=0}^{N-1}\|d^{k}\|^{2} &\leq \frac{1}{\frac{\sigma}{2}-\theta}\Big(\phi(x^{0})-\phi(x^{N})+\sum _{k=0}^{N-1}\nu _{k} + \sum _{k=0}^{N-1}\varepsilon _{k}\Big)\\ &\leq \frac{1}{\frac{\sigma}{2}-\theta} \Big(\phi(x^{0})-\Bar\phi+\sum _{k=0}^{\infty}\nu _{k} + \sum _{k=0}^{\infty}\varepsilon _{k}\Big).
\end{align*}
This gives us in turn that
\begin{equation*}
N\,\min\{\|d^k\|^2\;\big|\;k=0,1, \ldots N-1\} \leq \frac{\phi(x^{0})-\Bar\phi+\sum _{k=0}^{\infty}\nu _{k} + \sum _{k=0}^{\infty}\varepsilon _{k}}{\frac{\sigma}{2}-\theta},
\end{equation*}
which brings us to the inequality
\begin{equation}\label{complexity1}
\min \left\{\| d^{k}\|\;\big|\;k=0,1,\ldots,N-1\right\}\leq {\frac{ \sqrt{\phi(x^{0})-\Bar\phi\sum _{k=0}^{\infty}\nu_{k}+ \sum _{k=0}^{\infty}{\varepsilon _{k}}}} { \sqrt{ \frac{\sigma}{2}-\theta }}}\frac{1}{ \sqrt{N}}.
\end{equation}
Moreover, given any $\varepsilon > 0$, we deduce from
\begin{equation*}
N\geq \displaystyle\frac{{\phi(x^{0})- \Bar\phi+\sum _{k=0}^{\infty}\nu_{k} +\sum _{k=0}^{\infty}\varepsilon _{k} }}{\Big({\frac{\sigma}{2} - \theta}\Big) \varepsilon^{2}}
\end{equation*}  
the fulfillment of the estimate
\begin{equation*}
\phi(x^{0})- \Bar\phi+\sum _{k=0}^{\infty}\nu_{k} +\sum _{k=0}^{\infty}\varepsilon _{k} \leq N\Big({\frac{\sigma}{2} - \theta}\Big) \varepsilon^{2},
\end{equation*}
which being combined with \eqref{complexity1} provides
\begin{align*}
\min \left\{\| d^{k}\|:~k=0,1,\ldots,N-1\right\}&\leq {\frac{ \sqrt{\phi(x^{0})-\Bar\phi+\sum _{k=0}^{\infty}\nu_{k}+ \sum _{k=0}^{\infty}{\varepsilon _{k}}}} { \sqrt{ \frac{\sigma}{2}-\theta }}}\frac{1}{ \sqrt{N}} \notag\\
&\leq \displaystyle\frac{\sqrt{N\varepsilon^2 \Big(\frac{\sigma}{2} - \theta\Big)}}{\sqrt{\Big(\frac{\sigma}{2} - \theta\Big)}}\frac{1}{\sqrt{N}}\notag= \varepsilon
\end{align*}
and thus completes the proof of the theorem.
\end{proof}

\begin{theorem}
Suppose that  the sequence ${(\nu_{k})}_{k\in \mathbb{N}}$  is chosen according to strategy {\rm\textbf{(A3)}}, and let $0< \xi <1/2$. If $(\varepsilon _{k})_{k\in \mathbb{N}}$ is such that $\varepsilon_k \leq \xi\Big(\frac{\rho}{2} - \theta\Big)||d^k||^2$ for all $k \in \mathbb{N}$ and if $v _{k}\leq \xi \Big(\frac{\rho}{2} - \theta\Big)\|d^{k}\|^{2}$ for all $k\geq k_{0}$, then for each $N\in\mathbb{N}$ with $N> k_0$ we have
\begin{equation*}
\min \{\|d^{k}\| : k=0,1,\ldots,N-1\} \leq {\frac{ \sqrt{\phi(x^{0})-\Bar\phi+\sum _{k=0}^{k_0-1}\nu_{k}+\sum _{k=0}^{k_0-1}\varepsilon _{k}}}{ \sqrt{ (1-\xi)\Big(\frac{\sigma}{2} - \theta\Big) }}}\frac{1}{ \sqrt{N}}.
\end{equation*}
Consequently, given $\varepsilon>0$ and $k_{0}\in \mathbb{N}$ with $\nu_{k}\leq \xi \Big(\frac{\sigma}{2} - \theta\Big)\|d^{k}\|^{2}$ for $k\geq k_{0}$, it follows from
$$
N\geq \max \left\{k_0, \frac{ \phi(x^{0})- \Bar\phi+\sum _{k=0}^{k_0-1}\nu_{k} +\sum _{k=0}^{k_0-1}\varepsilon _{k}}{\Big(\frac{\sigma}{2} - \theta\Big)(1-\xi ) \varepsilon^{2}}\right\}
$$ 
that $\min \left\{\| d^{k}\|\;\big|\;~k=0,1,\ldots,N-1\right\}\leq \varepsilon$.
\end{theorem}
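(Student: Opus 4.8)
The plan is to convert the per-iteration descent estimate \eqref{complexity} from Proposition~\ref{descent1} into a bound on the running sum $\sum_{k=0}^{N-1}\|d^k\|^2$ and then extract the minimum. Writing $a:=\frac{\sigma}{2}-\theta$, which is strictly positive since $\theta\in[0,\sigma/2)$, I would sum
\[
a\,\|d^k\|^2 \le \phi(x^k)-\phi(x^{k+1})+\nu_k+\varepsilon_k
\]
over $k=0,\dots,N-1$; the objective differences telescope, and replacing $-\phi(x^N)$ by $-\bar\phi$ via the lower-boundedness assumption (H2) gives
\[
a\sum_{k=0}^{N-1}\|d^k\|^2 \le \phi(x^0)-\bar\phi+\sum_{k=0}^{N-1}\big(\nu_k+\varepsilon_k\big).
\]

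First I would split the perturbation sum at the threshold $k_0$. For the head $0\le k<k_0$ I retain $\nu_k$ and $\varepsilon_k$ untouched, which is precisely what generates the partial sums $\sum_{k=0}^{k_0-1}\nu_k+\sum_{k=0}^{k_0-1}\varepsilon_k$ appearing in the numerator of the claimed bound. For the tail $k_0\le k\le N-1$ I would invoke the standing hypotheses on $(\nu_k)$ and $(\varepsilon_k)$ to dominate the combined perturbation by a multiple of $\|d^k\|^2$ and move it to the left-hand side. This absorption leaves a contraction factor of the form $(1-\xi)a$, producing
\[
(1-\xi)\,a\sum_{k=0}^{N-1}\|d^k\|^2 \le \phi(x^0)-\bar\phi+\sum_{k=0}^{k_0-1}\nu_k+\sum_{k=0}^{k_0-1}\varepsilon_k.
\]
Bounding $\sum_{k=0}^{N-1}\|d^k\|^2$ from below by $N\min\{\|d^k\|^2:0\le k\le N-1\}$ and taking square roots yields the first displayed estimate.

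For the ``consequently'' clause I would simply invert the estimate in $N$: to force $\min_k\|d^k\|\le\varepsilon$ it suffices that $\big(\phi(x^0)-\bar\phi+\sum_{k=0}^{k_0-1}\nu_k+\sum_{k=0}^{k_0-1}\varepsilon_k\big)/\big((1-\xi)a\,N\big)\le\varepsilon^2$, i.e. that $N$ exceed the quotient in the statement; intersecting this with $N>k_0$, which is needed so the tail index set is nonempty and the absorption step applies, gives the stated $\max\{k_0,\dots\}$ threshold. Substituting that lower bound for $N$ back into the estimate collapses its right-hand side to exactly $\varepsilon$.

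The delicate point, and the step I expect to require the most care, is the bookkeeping of the contraction factor in the absorption. Taken literally, the hypotheses bound $\nu_k$ and $\varepsilon_k$ \emph{separately} by $\xi a\|d^k\|^2$, so $\nu_k+\varepsilon_k\le 2\xi a\|d^k\|^2$ and the naive computation leaves the factor $(1-2\xi)a$, not $(1-\xi)a$. To land on the advertised $\sqrt{(1-\xi)a}$ in the denominator one must read the tail requirement as the joint bound $\nu_k+\varepsilon_k\le\xi a\|d^k\|^2$ for $k\ge k_0$; using the two separate $\xi a$-bounds instead yields the same conclusion with $1-2\xi$ replacing $1-\xi$. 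In either reading the hypothesis $\xi<1/2$ is exactly what keeps the resulting coefficient strictly positive, so that dividing by it and taking the square root is legitimate.
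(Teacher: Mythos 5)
Your proposal follows the paper's proof essentially step for step: sum the descent estimate \eqref{complexity}, telescope the objective values, split the perturbation sums at $k_0$, and absorb the tail terms into the left-hand side before extracting the minimum. The ``delicate point'' you flag is exactly right and is confirmed by the paper's own argument: the paper applies the two \emph{separate} tail bounds $\nu_k\le\xi\big(\tfrac{\sigma}{2}-\theta\big)\|d^k\|^2$ and $\varepsilon_k\le\xi\big(\tfrac{\sigma}{2}-\theta\big)\|d^k\|^2$, picks up the factor $2\xi$, and lands on $(1-2\xi)\big(\tfrac{\sigma}{2}-\theta\big)$ in the denominator of its final estimate \eqref{complexity2} and in the threshold for $N$ --- not the $(1-\xi)\big(\tfrac{\sigma}{2}-\theta\big)$ advertised in the theorem statement. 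So the discrepancy you identified is a genuine inconsistency between the statement and its proof (the statement's $(1-\xi)$, and likewise its $\tfrac{\rho}{2}-\theta$ in the hypotheses, appear to be typos for $(1-2\xi)$ and $\tfrac{\sigma}{2}-\theta$), the hypothesis $0<\xi<1/2$ being precisely what keeps $1-2\xi$ positive; your argument proves the same $(1-2\xi)$ version that the paper actually establishes.
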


\proof Pick $\xi \in (0,1/2)$ and $k_0 \in \mathbb{N}$ with ${\nu_k} \leq \xi\Big(\frac{\sigma}{2} - \theta\Big)\|d^k\|^2$ whenever $k \geq k_0$. By \eqref{complexity}, we get $\Big(\frac{\sigma}{2} - \theta\Big)\|d^k\|^2 \leq \phi(x^k) - \phi(x^{k+1}) + {\nu_k} +\varepsilon_k$ as $k=0,1,\ldots, N-1$. Summing up the last inequality from $k=0$ to $K=N-1$ and using assumption \textbf{(H2)} give us
$$
\Big(\frac{\sigma}{2} - \theta\Big)\sum_{k=0}^{N-1}\|d^k\|^2 \leq \phi(x^0) - \bar{\phi}+ \sum_{k=0}^{k_0 - 1}\nu_k + \sum_{k=k_0}^{N - 1}\nu _k + \sum_{k=0}^{k_0 - 1}\varepsilon _k + \sum_{k=k_0}^{N - 1}\varepsilon _k.
$$
Due to $\varepsilon_k \leq \xi\Big(\frac{\sigma}{2} - \theta\Big)\|d^k\|^2$ and ${\nu_k} \leq \xi\Big(\frac{\sigma}{2} - \theta\Big)\|d^k\|^2$ as  $k \in \mathbb{N}$, the last inequality becomes
\begin{align*}
\sum_{k=0}^{N-1}\Big(\frac{\sigma}{2} - \theta\Big)\|d^k\|^2 &\leq \phi(x^0) - \bar{\phi}+ \sum_{k=0}^{k_0 - 1}\nu_k + \sum_{k=k_0}^{N - 1}\xi\Big(\frac{\sigma}{2} - \theta\Big)\|d^k\|^2 + \sum_{k=0}^{k_0 - 1}\varepsilon _k\\ &+ \sum_{k=k_0}^{N - 1}\xi\Big(\frac{\sigma}{2} - \theta\Big)\|d^k\|^2\notag\\
&= \phi(x^0) - \bar{\phi}+ \sum_{k=0}^{k_0 - 1}\nu_k +  \sum_{k=0}^{k_0 - 1}\varepsilon _k + 2\sum_{k=k_0}^{N - 1}\xi\Big(\frac{\sigma}{2} - \theta\Big)\|d^k\|^2, 
\end{align*}
which implies in turn that
\begin{equation*}
\sum_{k=0}^{N-1}\Big(\frac{\sigma}{2} - \theta\Big)\|d^k\|^2\leq \phi(x^0) - \bar{\phi}+ \sum_{k=0}^{k_0 - 1}\nu_k +  \sum_{k=0}^{k_0 - 1}\varepsilon _k + 2\sum_{k=k_0}^{N - 1}\xi\Big(\frac{\sigma}{2} - \theta\Big)\|d^k\|^2.
\end{equation*}
Therefore, we arrive at the estimate
\begin{equation*}
\sum_{k=0}^{N-1}(1-2\xi)\|d^k\|^2 \leq \phi(x^0) - \bar{\phi}+ \sum_{k=0}^{k_0 - 1}{\nu_k} + \sum_{k=0}^{k_0 - 1}\varepsilon _k
\end{equation*}
that brings us to the inequality 
\begin{equation*}
N\min\left\{ \|d^k\|^2\;\big|\;~ k= 0,1, \dots, N-1 \right\} \leq \displaystyle\frac{\phi(x^0) - \bar{\phi}+\sum_{k=0}^{k_0 - 1}{\nu_k} + \sum_{k=0}^{k_0 - 1}\varepsilon _k}{(1-2\xi)\Big(\frac{\sigma}{2} - \theta\Big)},
\end{equation*}
which readily ensures that 
\begin{equation}\label{complexity2}
\min\left\{ \|d^k\|\;\big|\; k= 0,1, \ldots, N-1 \right\} \leq \displaystyle\frac{\sqrt{\phi(x^0) - \bar{\phi}+\sum_{k=0}^{k_0 - 1}{\nu_k} + \sum_{k=0}^{k_0 - 1}\varepsilon _k}}{\sqrt{(1-2\xi)\Big(\frac{\sigma}{2} - \theta\Big)}}\frac{1}{\sqrt{N}}
\end{equation}
and thus verifies the first estimate claimed in the theorem. Furthermore, the condition
\begin{equation*}
\max \left\{ k_0, \displaystyle\frac{\phi(x^0) - \bar{\phi}+\sum_{k=0}^{k_0 - 1}{\nu_k} + \sum_{k=0}^{k_0 - 1}\varepsilon _k}{\Big(\frac{\sigma}{2} - \theta\Big)(1 - 2\xi)\varepsilon^2} \right\} \leq N, 
\end{equation*}
yields, in particular, the inequality
\begin{equation*}
{\phi(x^0) - \bar{\phi}+\sum_{k=0}^{k_0 - 1}{\nu_k} + \sum_{k=0}^{k_0 - 1}\varepsilon _k}  \leq N\Big(\frac{\sigma}{2} - \theta\Big)(1-2\xi)\varepsilon^2,
\end{equation*}
which being combined with \eqref{complexity2} justifies that
\begin{align*}
\min\left\{ \|d^k\|\;\big|\; k= 0,1, \ldots, N-1 \right\} &\leq \displaystyle\frac{\sqrt{\phi(x^0) - \bar{\phi}+\sum_{k=0}^{k_0 - 1}{\nu_k} + \sum_{k=0}^{k_0 - 1}\varepsilon _k}}{\sqrt{(1-2\xi)\Big(\frac{\sigma}{2} - \theta\Big)}}\frac{1}{\sqrt{N}}\notag\\
&\leq \frac{\sqrt{N\Big(\frac{\sigma}{2} - \theta\Big)(1-2\xi)\varepsilon^2}}{{\sqrt{(1-2\xi)\Big(\frac{\sigma}{2} - \theta\Big)N}}}\notag= \varepsilon
\end{align*}
and therefore completes the proof of the theorem.
\qed

\begin{theorem} Let the sequences of positive numbers ${(\nu_{k})}_{k\in \mathbb{N}}$ and  ${(\varepsilon _{k})}_{k\in \mathbb{N}}$ satisfy the conditions $\displaystyle \lim_{N \to \infty}\frac{\sum^{N-1}_{k=0}{\nu_k}}{N} = 0$ and $\displaystyle \lim_{N \to \infty}\frac{\sum^{N-1}_{k=0}\varepsilon_k}{N} = 0$, respectively. Then we have $\displaystyle\liminf_{k\to \infty}\|d^k\| = 0$.
\end{theorem}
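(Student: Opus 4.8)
The plan is to derive the conclusion directly from the per-iteration descent estimate \eqref{complexity}, which is exactly the tool that makes this a Cesàro-averaging argument. Recall that \eqref{complexity} gives $\left(\frac{\sigma}{2}-\theta\right)\|d^k\|^2 \leq \phi(x^k)-\phi(x^{k+1})+\nu_k+\varepsilon_k$ for every $k\in\mathbb{N}$, and that $\frac{\sigma}{2}-\theta>0$ since $\theta\in[0,\frac{\sigma}{2})$. The key observation is that the right-hand side telescopes in $\phi$, so summing controls the partial sums of $\|d^k\|^2$ in terms of $\phi(x^0)-\bar\phi$ (finite by assumption \hyperlink{H2}{(H2)}) plus the partial sums of $\nu_k$ and $\varepsilon_k$. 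Dividing by $N$ then turns the hypotheses on the Cesàro means of $(\nu_k)$ and $(\varepsilon_k)$ into a statement that the Cesàro mean of $\|d^k\|^2$ vanishes, which forces $\liminf_{k\to\infty}\|d^k\|=0$.

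Concretely, I would first sum \eqref{complexity} from $k=0$ to $N-1$ and telescope the $\phi$-terms to obtain
\begin{equation*}
\left(\frac{\sigma}{2}-\theta\right)\sum_{k=0}^{N-1}\|d^k\|^2 \leq \phi(x^0)-\phi(x^N)+\sum_{k=0}^{N-1}\nu_k+\sum_{k=0}^{N-1}\varepsilon_k.
\end{equation*}
Next I would invoke assumption \hyperlink{H2}{(H2)}, namely $\phi(x^N)\geq\bar\phi>-\infty$, to replace $-\phi(x^N)$ by $-\bar\phi$, giving the upper bound $\phi(x^0)-\bar\phi+\sum_{k=0}^{N-1}\nu_k+\sum_{k=0}^{N-1}\varepsilon_k$. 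Dividing both sides by $N$ yields
\begin{equation*}
\left(\frac{\sigma}{2}-\theta\right)\frac{1}{N}\sum_{k=0}^{N-1}\|d^k\|^2 \leq \frac{\phi(x^0)-\bar\phi}{N}+\frac{1}{N}\sum_{k=0}^{N-1}\nu_k+\frac{1}{N}\sum_{k=0}^{N-1}\varepsilon_k.
\end{equation*}
Letting $N\to\infty$, the first term on the right tends to $0$ because $\phi(x^0)-\bar\phi$ is a fixed finite constant, and the two remaining terms tend to $0$ by the standing hypotheses on the averages of $(\nu_k)$ and $(\varepsilon_k)$. Since $\frac{\sigma}{2}-\theta>0$, I conclude that $\lim_{N\to\infty}\frac{1}{N}\sum_{k=0}^{N-1}\|d^k\|^2=0$.

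The final step is to pass from vanishing of the Cesàro mean of $\|d^k\|^2$ to $\liminf_{k\to\infty}\|d^k\|=0$, which I would do by contradiction: if $\liminf_{k\to\infty}\|d^k\|=L>0$, then there is $k_0$ with $\|d^k\|^2\geq L^2/4$ for all $k\geq k_0$, whence $\frac{1}{N}\sum_{k=0}^{N-1}\|d^k\|^2\geq\frac{N-k_0}{N}\cdot\frac{L^2}{4}\to\frac{L^2}{4}>0$, contradicting the limit just established. I do not expect a genuine obstacle here, as the whole argument is an averaging variant of the summability proofs already used in Theorems~\ref{teoS2} and \ref{teoS3}; the only point requiring mild care is the contradiction step, where one must use $\liminf$ (rather than $\lim$) correctly, since only the average of $\|d^k\|^2$ is controlled and individual terms need not converge.
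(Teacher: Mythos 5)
Your proposal is correct and follows essentially the same route as the paper: sum the descent estimate \eqref{complexity}, telescope the $\phi$-terms, bound $\phi(x^N)$ below by $\bar\phi$ via \hyperlink{H2}{(H2)}, and divide by $N$ to exploit the Ces\`aro hypotheses on $(\nu_k)$ and $(\varepsilon_k)$. The only (cosmetic) difference is the final step: the paper bounds $\min_{0\le k\le N-1}\|d^k\|^2$ by the average and lets $N\to\infty$ to extract a subsequence tending to zero, whereas you argue by contradiction from the vanishing Ces\`aro mean; both are valid and equivalent in substance.
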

\proof Taking the partial sum in \eqref{complexity} and using assumption \textbf{(H2)} give us the relationships
\begin{align*}
\sum_{k=0}^{N - 1}{\left( \displaystyle\frac{\rho}{2} -\theta \right)\|d^k\|^2} &\leq \sum_{k=0}^{N-1}{\left[\phi(x^k) - \phi(x^{k+1})\right]} + \sum_{k=0}^{N-1}{{\nu_k}} + \sum_{k=0}^{N-1}{\varepsilon_k}\\
&= \phi(x^0) - \phi(x^{N}) + \sum_{k=0}^{N-1}{{\nu_k}} + \sum_{k=0}^{N-1}{\varepsilon_k}\\
&\leq \phi(x^0) - \Bar\phi + \sum_{k=0}^{N-1}{{\nu_k}} + \sum_{k=0}^{N-1}{\varepsilon_k}.
\end{align*}
In this way, we arrive at the inequality
\begin{align*}
N\min\left\{ \|d^k\|^2 :~ k= 0, \dots, N-1 \right\} \leq \frac{1}{\frac{\rho}{2} -\theta}\left( \phi(x^0) - \Bar\phi + \sum_{k=0}^{N-1}{{\nu_k}} + \sum_{k=0}^{N-1}{\varepsilon_k} \right),
\end{align*}
which implies in turn the following estimates:
\begin{equation}\label{complexity3}
\min_{k= 0,\ldots, N-1} \|d^k\| \leq \sqrt{\frac{1}{\frac{\rho}{2} -\theta}\left( \frac{\phi(x^0) - \Bar{\phi}+\sum^{N-1}_{k=0}{\nu_k} + \sum^{N-1}_{k=0}\varepsilon_k}{N}\right)}.
\end{equation}
Passing to the limit as $N\to \infty$ in \eqref{complexity3}, we get that $\displaystyle\lim_{N \to \infty} \min\{\|d^k\|\;:\; k= 0, \ldots, N-1 \}$ $=0$. This means that there exists a subsequence of $(\|d^k\|)_{k\in\mathbb{N}}$ that converges to $0$ as $k \to \infty$. Recalling that $\|d^k\| > 0$ for all $k\in \mathbb{N}$, it follows that $\displaystyle\liminf_{k\to\infty}{\|d^k\|} = 0$, and thus we are done with the proof.
\qed

\section{Numerical Illustration}\label{sec8}

The objective of the numerical illustrations is to verify in practice that the inexact solutions found by BDCA and nmBDCA satisfy the conditions of Algorithm InmBDCA. At this stage, we are not focused on analyzing the computational performance of the method compared to others or addressing the efficiency of the approach used to solve the subproblems. Instead, we aim to demonstrate that by computing an inexact solution through running the BDCA and nmBDCA methods for a finite number of steps in each subproblem, we are effectively applying our inexact versions of these methods.

The numerical illustrations in this section are conducted using MATLAB software. The initial points are randomly chosen within the box $[-10,10] \times [-10,10]$. To solve the subproblems, we use the \texttt{fminsearch} toolbox with the inner stop rule \texttt{optimset('TolX',1e-7,'TolFun',1e-7)} (the justification for using \texttt{fminsearch} rather than more efficient methods is simplicity). The stopping criterion for the algorithm is $\|x^{k+1}-x^k\|<10^{-5}$. In the Example~\ref{InmBDCAex1} below, the constants in the design of Algorithm~InmBDCA are set as $\rho = 0.6$, $\beta = 0.1$, $\bar{\lambda} = 1$, and $\theta = 0.2$.

MATLAB solves Example~\ref{InmBDCAex1} inaccurately when computing the subproblem using the \texttt{fminsearch} toolbox. In follows, we verify computationally that the solution found by MATLAB satisfies inequalities \eqref{2inexactnmBDCA} and \eqref{3inexactnmBDCA} in Figures~\ref{desBDCA} and \ref{desnmBDCA}.

\begin{example}[{\cite[Example~3.3]{ARAGON2019}}]\label{InmBDCAex1} {\rm 
Let $\phi:\mathbb{R}^2\to\mathbb{R}$ be given by 
$\phi(x,y):=x^2+y^2+x+y-|x|-|y|$. We can obtain 
the following DC decomposition of the cost function: $\phi(x,y) 
= g(x,y)-h(x,y)$, where $g(x,y):=\frac{3}{2}
(x^2+y^2)+x+y$ and $h(x,y:)=\frac{1}{2}
(x^2+y^2)+|x|+|y|$. The minimum point of $\phi$ is $x_{opt}=(-1, -1)$, and the optimum value is $\phi_{opt} =-2$.}
\end{example}

\begin{figure}[ht]
\centering \includegraphics[width=0.5\linewidth]{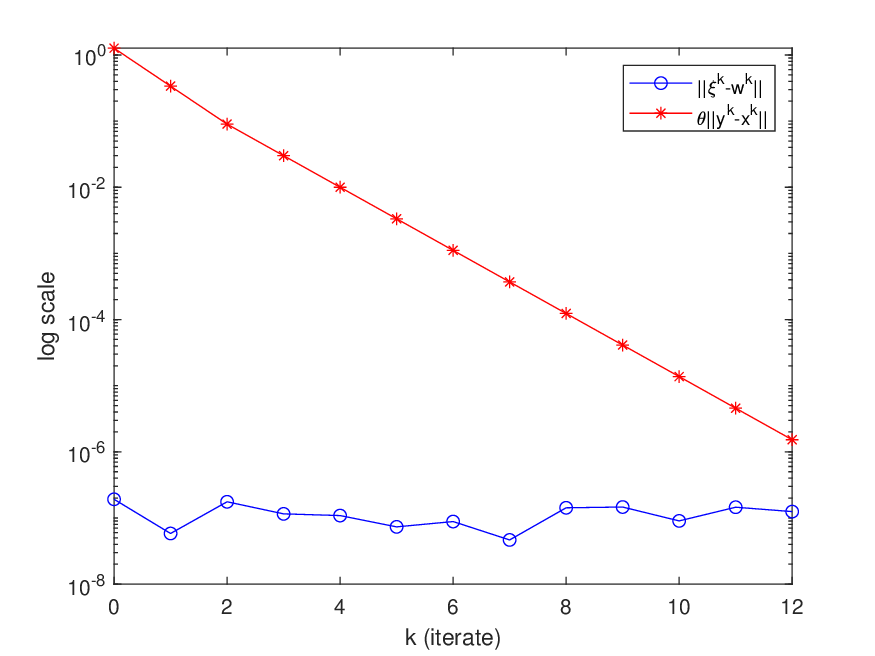}
\caption{Example \ref{InmBDCAex1} starting from $x^0 =(6.2945,8.1158)$.}
\label{desBDCA}
\end{figure}

In Example~\ref{InmBDCAex2}, the constants in the design of Algorithm~2 are set as $\rho = 0.6$, $\beta = 0.1$, $\bar{\lambda} = 1$, and $\theta = 0.2$. The sequence of parameters $ {(\nu_k)}_{k\in \mathbb{N}}$ are chosen as ${\nu_k} = 0.01\frac{\|d^k\|^2}{k+1}$ for all $k \in \mathbb{N}$.

\begin{example}[{\cite[Example 3.4]{ARAGON2019}}]\label{InmBDCAex2} {\rm 
Let $\phi:\mathbb{R}^2\to\mathbb{R}$ given by 
$\phi(x,y):=\frac{1}{2}(x^2+y^2)+|x|+|y|-\frac{5}{2}x$. We have the following DC decomposition of the cost function:
$\phi(x,y) = g(x,y)-h(x,y)$, where $g(x,y):=x^2+y^2+|x|+|y|-\frac{5}{2}x$ and $h(x,y):=\frac{1}{2}
(x^2+y^2)$. The minimum point of $\phi$ is $x_{opt}=(1.5, 0)$, and the optimum value is $\phi_{opt} =-1.125$.}
\end{example}

\begin{figure}[ht]
\centering
\includegraphics[width=0.5\linewidth]{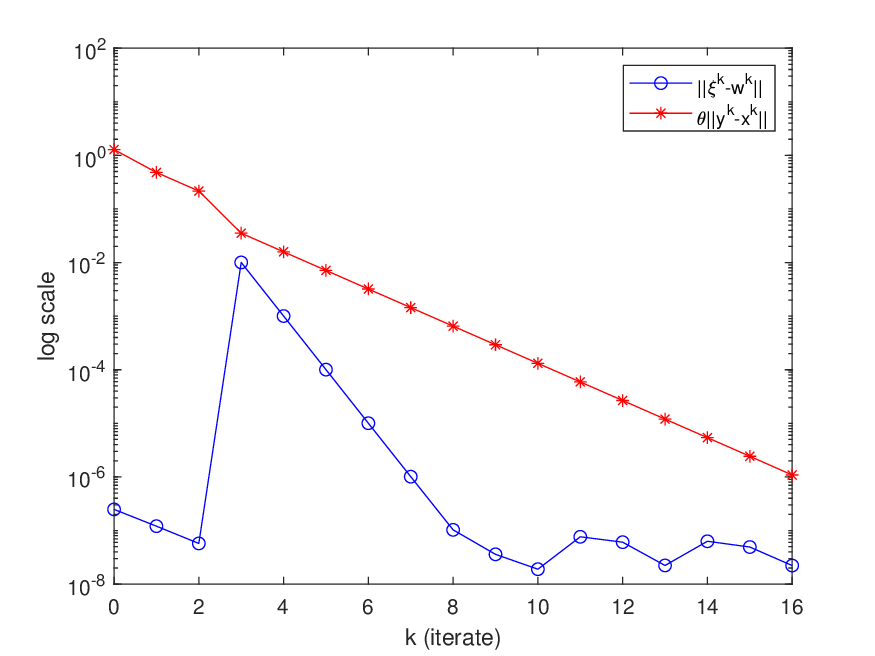}
\caption{Example \ref{InmBDCAex2} starting from $x^0=(-4.4615,-9.0766)$.} 
\label{desnmBDCA}
\end{figure}

\section{Conclusions}\label{sec9}

In this paper, we design a novel inexact algorithm to solve problems of DC programming, where both functions in DC compositions may be nondifferentiable. This algorithm can be viewed as a inexact version of Boosted Difference of Convex Algorithm (BDCA) in DC programming, while facing inexactness and nondifferentiability in the new algorithm requires a nonmonotone linesearch to maintain descent directions. Our major result establishes criticality of any accumulation point of the iterative sequence generated by the proposed algorithm. 

Directions of our future research include the following. First we intend to find conditions under which the criticality of accumulation points can be replaced by their stationarity in the senses discussed in Remark~\ref{stat}. Then our attention will be addressed to establishing global convergence of the entire iterative sequence to critical/stationary points of the algorithm. Deriving convergence rates is yet another goal of our future research. In this way, we plan to find conditions allowing us to include our algorithm into the abstract scheme of global convergence and convergent rates developed recently in \cite{bento24}, which covers the original exact version of BDCA introduced in  \cite{ARAGON2017}. Finally, we aim at considering problems of DC programming involving extended-real-valued functions, which makes it possible to deal with problems of constrained optimization.

\section*{Acknowledgements}
The research of the first author was partly supported by National Council for Scientific and Technological Development of Brazil (CNPq) Grant 304666/2021-1. The research of the second author was partly supported by the US National Science Foundation under grant DMS-1808978, by the Australian Research Council Discovery Project DP-190100555, and by Project 111 of China under grant D21024. The research of the third author was partly supported by Brazilian Federal Agency for Support and Evaluation of Graduate Education (CAPES). The research of the fourth author was partly supported by CNPq Grant 315937/2023-8.


\end{document}